\documentclass[11pt,reqno]{amsart}

\usepackage{preamble_general}
\usepackage{graphicx}
\usepackage{float}
\usepackage[
    colorlinks,
    linkcolor={red!50!black},
    citecolor={green!50!black},
    urlcolor={blue!70!black},
    linktocpage=true
]{hyperref}
\usepackage[nameinlink,noabbrev]{cleveref}
\usepackage{orcidlink}
\usepackage{preamble_abbreviations}

\theoremstyle{plain}
\newtheorem{maintheorem}[alphaprop]{Theorem}
\crefname{maintheorem}{theorem}{theorems}
\Crefname{maintheorem}{Theorem}{Theorems}
\crefname{alphaprop}{theorem}{theorems}
\Crefname{alphaprop}{Theorem}{Theorems}
\crefname{conjecture}{conjecture}{conjectures}
\Crefname{conjecture}{Conjecture}{Conjectures}
\crefname{question}{question}{questions}
\Crefname{question}{Question}{Questions}

\title{Dynamics on projective K3 surfaces: Herman rings}

\subjclass[2020]{Primary 37F80, 14J28; Secondary 37F50, 14J50, 37B40}

\author{Junyu Cao \orcidlink{0009-0003-6875-3981}}
\date{\today}

\begin{document}

\begin{abstract}
We construct an automorphism of a smooth projective K3 surface with
positive topological entropy and a non-empty Fatou set.
The Fatou set contains invariant domains biholomorphic to products of
two annuli. On each domain, the automorphism is holomorphically conjugate
to a fixed-point-free irrational rotation.
These domains are two-dimensional analogues of \emph{Herman rings}.
The construction answers a question posed by McMullen in 2002 and several
questions posed by Cantat in his 2018 ICM address.
\end{abstract}

\maketitle
\setcounter{tocdepth}{2}
\tableofcontents
\enlargethispage{2pt}

\section{Introduction}

Let \(X\) be a complex K3 surface and let
\(T \colon X \to X\) be an automorphism. The \emph{(two-sided) Fatou set}
of \(T\) is
\[
\Fat(T)\coloneqq
\left\{
x \in X :
\{T^n : n \in \Z\}\text{ is equicontinuous on some open neighborhood of }x
\right\}.
\]
The set \(\Fat(T)\) is open and \(T\)-invariant, and it describes the
locally regular part of the dynamics in both forward and backward time.
For K3 automorphisms, this set agrees with the usual forward Fatou
set.\footnote{The automorphism \(T\) preserves a smooth volume form.
By Poincar\'e recurrence, some forward iterates \(T^{n_j}\) converge to
the identity locally uniformly on the forward Fatou set. The backward
iterates are then limits of forward iterates, so they are equicontinuous
there as well.}

The Fatou set records regular dynamics, whereas positive topological entropy
records chaotic dynamics. It is therefore natural to ask whether a
positive-entropy automorphism can have a non-empty Fatou set.
Cantat raised this question in his thesis \cite{CantatThesis1999}.
Independently, McMullen gave the first such examples
\cite{McMullen2002}*{Theorem~1.1} on non-projective K3 surfaces. In his
examples, there exists a \(T\)-invariant open neighborhood \(U\) of a fixed
point that is biholomorphic to a bidisk, and the restriction of the
automorphism to \(U\) is conjugate to a Diophantine rotation.
Thus \(U \subset \Fat(T)\). Such a domain is
called a \emph{Siegel disk}.
More recently, Iwasaki--Takada
\cites{IwasakiTakada2022,IwasakiTakada2023} and Iwasaki
\cite{Iwasaki2025} extended McMullen's non-projective constructions.

McMullen also proved that an automorphism of a projective K3 surface cannot
have a non-resonant Siegel disk \cite{McMullen2002}*{Theorem~7.2}. This does not
force the Fatou set to be empty: it could still contain a neighborhood of a
resonant fixed point or a fixed-point-free invariant domain. McMullen
therefore asked whether a positive-entropy automorphism of a projective K3
surface can have a non-empty Fatou set
\cite{McMullen2002}*{Introduction, Questions}.
Cantat later reiterated this question in his ICM address
\cite{Cantat2018}*{Questions~3.4}.

In this paper, we pursue the second possibility by constructing a
fixed-point-free invariant domain for a positive-entropy automorphism of a
projective K3 surface and proving that this domain lies in the Fatou set.

The basic model for this fixed-point-free approach is an invariant
\emph{Herman ring}, an annulus in one-dimensional complex dynamics on which
the map is holomorphically conjugate to an irrational rotation. Moncet
developed a two-dimensional analogue on a real rational surface. He
constructed a birational diffeomorphism whose entire real locus lies in the
Fatou set, and asked whether the same phenomenon can occur for an automorphism
\cite{Moncet2013}*{Theorem~A and Introduction, Open Question~2}. More
recently, Cantat suggested that higher-dimensional analogues of Herman rings
might be used to construct an automorphism whose Fatou set contains the
entire real locus of a real projective K3 surface
\cite{CantatFatou}*{Section~7}.

The main result of this paper answers the questions above affirmatively and
realizes this strategy.

\begin{maintheorem}\label{thm:complex-main}
There exist a complex projective K3 surface \(X\) and an automorphism
\(T \colon X \to X\) with the following properties.
The automorphism \(T\) has positive topological entropy and a non-empty
Fatou set. More precisely, \(\Fat(T)\) contains a \(T\)-invariant biannulus.
On this domain, \(T\) is holomorphically conjugate to a Diophantine rotation.
\end{maintheorem}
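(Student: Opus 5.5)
The plan is to build \(T\) as a composition of two commuting-fiber structures, namely two elliptic fibrations on a projective K3 surface, and to locate the Herman-type domain inside a region where both fibrations have smooth fibers. Choose \(X\) with two genus-one fibrations \(\pi_1,\pi_2 \colon X \to \mathbb{P}^1\) and infinite-order automorphisms \(T_1,T_2\). Each \(T_i\) preserves \(\pi_i\) and acts on the smooth fibers by translation through a section, as in a Mordell--Weil group. The classes of the fibers of \(\pi_1\) and \(\pi_2\) are distinct isotropic classes. A standard argument then shows that \(T=T_1\circ T_2\) acts on \(H^{1,1}\) with a hyperbolic (Salem) spectral radius, so it has positive topological entropy by Gromov--Yomdin.

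The core is local. On an open set \(V\) where both fibrations are smooth, pick local holomorphic coordinates \((z,w)\) such that the fibers of \(\pi_1\) are \(\{w=\text{const}\}\) and those of \(\pi_2\) are \(\{z=\text{const}\}\). This needs the fibrations to be transverse along a real two-torus family. Concretely, I would take \(X\) to be a real K3 surface, for instance a \((2,2,2)\) surface in \((\mathbb{P}^1)^3\) or a Kummer-type double cover. I would then restrict to a neighborhood of a compact real torus in the real locus that is foliated by real fiber circles of both fibrations. In uniformizing coordinates the translation \(T_1\) reads \((z,w)\mapsto(z+\alpha_1(w),w)\), and \(T_2\) reads \((z,w)\mapsto(z,w+\alpha_2(z))\), with \(z,w\) in \(\mathbb{C}/\mathbb{Z}\). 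The rotation numbers \(\alpha_i\) are holomorphic and real on the real torus. The composition is then a holomorphic perturbation of a rotation on a complexified torus \(A\times A\), where \(A\) is an annulus.

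To obtain an invariant biannulus on which \(T\) is conjugate to a Diophantine rotation, I would arrange a persistent invariant torus carrying a fixed Diophantine rotation vector \((\theta_1,\theta_2)\). One route is to choose the fibrations so that \(\alpha_1\) and \(\alpha_2\) are locally constant near a specific level. This is too rigid, since they are nonconstant holomorphic functions. The better route is KAM with parameters: \(T\) is an exact-symplectic map for the real two-form \(\mathrm{Re}\,\Omega\) restricted to the real locus, and it is real analytic there. Near a real torus on which \((\alpha_1,\alpha_2)\) takes a Diophantine value with twist (nondegenerate derivative), KAM (Rüssmann/Herman) gives a real-analytic invariant circle family for the real map. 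Complexifying, an analytic conjugacy of the real torus to a Diophantine rotation extends holomorphically to a complex neighborhood, which is a biannulus \(\{|\mathrm{Im}\,z|,|\mathrm{Im}\,w|<\epsilon\}\). On it \(T\) is conjugate to \((z,w)\mapsto(z+\theta_1,w+\theta_2)\), which is fixed-point-free. The union of the rotation's iterates is equicontinuous, so the domain lies in \(\Fat(T)\). This structure is consistent with McMullen's obstruction, which only forbids non-resonant Siegel disks at fixed points.

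The main obstacle is the existence of a genuinely invariant two-torus whose complex neighborhood is invariant. The composition of two fiber translations is not literally a product rotation. KAM in real dimension 2 gives only invariant curves, and for a symplectic map on a real four-manifold a Lagrangian torus with Diophantine frequencies requires nondegeneracy and small perturbation. So the step I would try first is to work with a family \(X_t\) in which \(T_t\) degenerates to an integrable map (e.g. when the two fibrations nearly coincide, or near a limit where the section heights vanish). I would apply Herman's analytic KAM theorem for Diophantine tori with parameters, and then use that projective real K3 surfaces with the required lattice are dense in the family to select a projective member, while checking the twist condition explicitly.
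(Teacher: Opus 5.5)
Your proposal leaves open the step that carries the theorem. That step is producing a compact, $T$-invariant, maximally totally real, real-analytic torus on which $T$ is analytically conjugate to a Diophantine rotation. The mechanism you sketch for it cannot work, for three reasons.

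\textbf{The local model is not available.} The restriction of $\pi_2$ to a fiber of $\pi_1$ is a branched cover of $\P^1$, never affine in the flat coordinate. So $T_1$ and $T_2$ are not simultaneously the shears $(z,w)\mapsto(z+\alpha_1(w),w)$ and $(z,w)\mapsto(z,w+\alpha_2(z))$ in one chart. Even granting that chart, for nonconstant $\alpha_i$ the composition is a standard-map-type twist map of the real torus, not a perturbation of a rotation. KAM then gives only invariant circles, and these have no Fatou neighborhoods: the twist makes $\|DT^n\|$ grow linearly, contrary to the Cauchy bounds that equicontinuity would give.

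\textbf{The twist condition can never hold at the tori you need.} Let $M$ be a real-analytic invariant torus with $T|_M$ minimal. The set of points where $T_pM$ is a complex line is closed and invariant, so one of two things happens.
\begin{enumerate}
\item $M$ is a smooth elliptic curve. This is impossible, because curves invariant under a loxodromic automorphism span a negative definite lattice, while an elliptic curve on a K3 surface has self-intersection $0$.
\item $M$ is maximally totally real. Then \Cref{prop:herman-ring-complexify-torus} conjugates $T$ to a rigid rotation on a whole biannulus, so the frequency is locally constant and the twist vanishes identically.
\end{enumerate}
The only usable KAM statement is therefore the twist-free one, \Cref{thm:herman}. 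It conjugates a perturbation of $\Rot_{\balpha}$ to $\Rot_{\balpha}$ only up to a correction vector in $\R^2$.

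\textbf{The last step fails for the same reason.} Passing to a nearby projective member does not work, because a small perturbation of a map conjugate to $\Rot_{\balpha}$ is in general not conjugate to it.

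The missing idea is a degeneration in which $T$, restricted to an a priori invariant totally real torus, is \emph{exactly} a rotation whose vector moves in two independent directions. You also need a way to cancel the correction term.

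The paper gets this from the singular rotation models of \Cref{prop:singular-toral-family}. These are reducible complete intersections $X_{c(\bt)}\subset(\P^1)^4$ whose real locus $\Gamma(\id)\sqcup\Gamma(\Rot_{-\nu(\bt)})$ lies in the smooth locus. On it, $T=B\circ A$, a composition of two deck involutions rather than fiber translations, acts by $\Rot_{\pm\nu(\bt)}$.

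The paper then smooths along a direction that is good uniformly in $\bt$ (\Cref{lem:second-smoothing-pencil}). The implicit function theorem keeps the two tori as graphs. They are the two components of the real locus, each exchanged by $A$ and by $B$, hence $T$-invariant and maximally totally real. $T$ acts on the first one by a map close to $\Rot_{\nu(\bt)}$.

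Herman's correction term then satisfies $\Theta(\bt,0)=\nu(\bt)-\balpha$. The degree argument of \Cref{lem:phase-selection} yields $\bt_\lambda$ with $\Theta(\bt_\lambda,\lambda)=0$, and \Cref{prop:herman-ring-complexify-torus} produces the biannulus.

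The degenerations you suggest do not supply this. Merging the two fibrations makes $T$ close to a single fiberwise shear. Letting section heights vanish only moves $T_1$ and $T_2$ toward finite-order maps. Neither exhibits an invariant real torus on which the limiting map is a rotation with two adjustable frequencies.

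A smaller point: $T_1\circ T_2$ need not be loxodromic, since a product of two parabolic isometries with distinct fixed points can be elliptic. One usually takes $T_1^mT_2^n$ with $m,n$ large.
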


A \emph{biannulus} is a complex domain biholomorphic to a product of two
annuli. The invariant biannulus in \Cref{thm:complex-main} is a
two-dimensional analogue of a Herman ring.

\Cref{thm:complex-main} follows from a more precise statement about a real
K3 surface. To formulate it, we fix notation for rotations and make the
Diophantine condition precise.
Set \(\T\coloneqq\R/2\pi\Z\) and
\(\S^1\coloneqq\{z\in\C:\abs z=1\}\).
For \(\bu=(u_1,u_2)\in\R^2\), write \([\bu]\in\T^2\) for its class modulo
\((2\pi\Z)^2\).
We identify \(\T^2\) with \((\S^1)^2\) by
\([\bu]\mapsto(e^{iu_1},e^{iu_2})\).

For \(\bbeta=(\beta_1,\beta_2)\in\R^2\), the rotation
\(\Rot_{\bbeta}\colon\T^2\to\T^2\) is given by
\([\bu]\mapsto[\bu+\bbeta]\).
We call \(\bbeta\) its \emph{rotation vector}.
It is determined by the rotation modulo \((2\pi\Z)^2\).
The same vector defines a rotation of \((\C^*)^2\):
\[
R_{\bbeta}(z_1,z_2)
\coloneqq(e^{i\beta_1}z_1,e^{i\beta_2}z_2),
\qquad \text{for } (z_1,z_2)\in(\C^*)^2.
\]
The identification above conjugates \(\Rot_{\bbeta}\) to
\(R_{\bbeta}|_{(\S^1)^2}\).

The rotations in our main theorems have Diophantine rotation vectors.
\begin{definition}[Diophantine condition]
\label[definition]{dfn:diophantine-condition}
Let \(\gamma>0\) and \(\tau\geq2\).
The set \(\operatorname{DC}(\gamma,\tau)\subset\R^2\) consists of the
vectors \(\balpha=(\alpha_1,\alpha_2)\) satisfying the following bound.
For every \(\bfk=(k_1,k_2)\in\Z^2\setminus\{\bzero\}\), we have
\begin{equation}\label{eq:diophantine}
\operatorname{dist}(k_1\alpha_1+k_2\alpha_2,2\pi\Z)
\geq\gamma\,\norm{\bfk}_\infty^{-\tau}.
\end{equation}
Here \(\norm{\bfk}_\infty=\max\{\abs{k_1},\abs{k_2}\}\).
We call \(\balpha\) \emph{Diophantine} if it belongs to
\(\operatorname{DC}(\gamma,\tau)\) for some \(\gamma>0\) and \(\tau\geq2\).
The condition depends only on \([\balpha]\in\T^2\).
We also write \(\operatorname{DC}(\gamma,\tau)\) for its image in \(\T^2\).
\end{definition}

For \(r>0\), we define the biannulus
\[
\cA_r\coloneqq
\{(z_1,z_2)\in(\C^*)^2:e^{-r}<\abs{z_k}<e^r,\ k=1,2\}.
\]

The pair \((X,T)\) in \Cref{thm:complex-main} is obtained by complexifying
the following smooth real projective K3 surface and its \(\R\)-automorphism.

\begin{maintheorem}
\label{thm:real-main}
There exist a smooth projective surface \(X_{\R}\) over \(\R\) and an
\(\R\)-automorphism
\(T_{\R} \in \operatorname{Aut}_{\R}(X_{\R})\) with the following
properties. Write \(X\) and \(T\) for their complexifications.
Then \(X\) is a complex projective K3 surface and
\(T \in \operatorname{Aut}(X)\).
The Picard number satisfies \(\rho(X)\geq4\).
\begin{enumerate}[label=\textup{(\roman*)}]
\item\textup{\textbf{Real dynamics.}}
The real locus is the disjoint union of two tori:
\[
X_{\R}(\R) = M_0 \sqcup M_1,
\qquad M_\ell \simeq (\S^1)^2 \quad \text{for } \ell=0,1.
\]
There are a constant \(\gamma>0\) and a vector
\(\balpha=(\alpha_1,\alpha_2)\in\operatorname{DC}(\gamma,3)\).
For each \(\ell = 0,1\), there is a
real-analytic diffeomorphism
\(j_{\ell,\R} \colon (\S^1)^2 \longrightarrow M_\ell\)
that conjugates the real dynamics to a rotation:
\[
T_{\R} \circ j_{\ell,\R}
= j_{\ell,\R} \circ R_{(-1)^\ell\balpha},
\qquad \text{for } \ell = 0,1.
\]
Thus \(T_{\R}\) is conjugate to a rotation on each
component of the real locus. In particular,
\[
h_{\mathrm{top}}\bigl(T_{\R}|_{X_{\R}(\R)}\bigr) = 0.
\]
\item\textup{\textbf{Complex dynamics.}} The topological entropy of the holomorphic dynamical system \((X,T)\)
is
\[
h_{\mathrm{top}}(T)
= \log(7 + 4\sqrt3).
\]
\item\textup{\textbf{Invariant biannuli.}}
For some \(r>0\), each map \(j_{\ell,\R}\) extends to a holomorphic embedding
\(j_\ell \colon \cA_r \longrightarrow X\).
The extension satisfies \(j_\ell|_{(\S^1)^2} = j_{\ell,\R}\) and
\[
T \circ j_\ell = j_\ell \circ R_{(-1)^\ell\balpha}.
\]
The open sets \(W_\ell \coloneqq j_\ell(\cA_r)\) are disjoint.
They satisfy
\[
X_{\R}(\R) \subset W_0 \cup W_1 \subset \Fat(T).
\]
\end{enumerate}
\end{maintheorem}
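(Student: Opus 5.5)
The construction I would use is a real K3 surface in \((\P^1)^3\): a smooth \((2,2,2)\) hypersurface \(X_\R\) defined over \(\R\). The three projections \(X\to(\P^1)^2\) are double covers, and each gives a covering involution \(\sigma_1,\sigma_2,\sigma_3\). I would take \(T_\R\) to be a product of these involutions, for instance \(T=\sigma_1\sigma_2\sigma_3\), or a suitable modification if needed to reach the stated entropy.

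\textbf{Step 1: entropy.} For a generic member of the family, the action of \(\sigma_i\) on the rank-3 lattice spanned by the three hyperplane pullbacks is explicit. The entropy is then \(\log\) of the spectral radius of \(T^*\), by Gromov--Yomdin and Cantat. The product \(\sigma_1\sigma_2\sigma_3\) has spectral radius \(9+4\sqrt5\), which is not the target \(7+4\sqrt3=(2+\sqrt3)^2\). The target value points instead to an extra class: we need \(\rho\ge4\), so the lattice is larger, for example because of an extra divisor or a nontrivial real structure. So I would set up the lattice \(\mathrm{NS}(X)\) of rank \(4\) first. I would then choose \(T\) as a word in the available involutions, possibly composed with a symmetry, whose spectral radius equals \((2+\sqrt3)^2\). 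Finally, I would verify that \(T\) preserves an ample class and is an automorphism, using Torelli or the explicit formulas.

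\textbf{Step 2: real dynamics.} I would choose the real coefficients so that \(X_\R(\R)\) consists of two tori \(M_0,M_1\). Each \(M_\ell\) should be a double cover of an annular region in \((\R\P^1)^2\), and the involutions should exchange the two sheets in an orientation pattern that gives the \((-1)^\ell\) sign. The key point is to make the real family a small perturbation of a degenerate member, for example a product-type or ``invariant-torus'' configuration. On that member, \(T_\R|_{M_\ell}\) is an exact rotation \(R_{\balpha_0}\) in suitable angular coordinates. I would find this by choosing each involution to act as a reflection on the circles, so that composites of reflections become rotations.

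\textbf{Step 3: KAM.} Away from the degenerate member, \(T_\R\) preserves the real-analytic area form \(\Omega|_{M_\ell}\) and is close to a rotation. I would then need a KAM/Arnold--Herman-type theorem for analytic symplectic perturbations of Diophantine rotations of \(\T^2\) with parameters. The family has enough real parameters, namely the coefficients of the \((2,2,2)\) form, to adjust the rotation vector; these parameters translate the rotation vector submersively, so one can tune them to hit a chosen \(\balpha\in\operatorname{DC}(\gamma,3)\). The output is an analytic conjugacy \(j_{\ell,\R}\) on a complex strip, which is exactly a holomorphic \(j_\ell\) on \(\cA_r\). Invariance under \(T\) and conjugacy to \(R_{\pm\balpha}\) then hold on \(W_\ell\), and equicontinuity follows because the iterates of \(T\) on \(W_\ell\) are conjugates of rotations. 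I expect this step to be the main obstacle. Two things must be shown: that the parameter map to rotation vectors is nondegenerate, with an explicit twist computation, and that quantitative estimates make a genuine, non-degenerate \(X\) satisfy the smallness threshold. That second part probably requires computer-assisted or explicit bounds, together with checking smoothness, Picard rank, and the entropy from Step 1 at the chosen parameter.
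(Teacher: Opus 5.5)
\textbf{Gap 1: the construction is missing.} Your proposal does not construct the two decisive ingredients, and your heuristic for the second one points the wrong way.

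First, you never fix the family and the automorphism. You correctly find that \(\sigma_1\sigma_2\sigma_3\) on a general \((2,2,2)\) surface has degree \(9+4\sqrt5\). After that you only infer that a rank-four lattice is needed and propose to obtain \(T\), possibly via Torelli. That would give no handle on the real dynamics. The paper uses complete intersections of two \((1,1,1,1)\)-divisors in \((\P^1)^4\); these are determinantal \((2,2,2)\) surfaces, with the fourth factor giving the extra class. It takes \(T=B\circ A\), where \(A,B\) are the deck involutions of the complementary projections \(\pi_{yw},\pi_{xz}\). The classes \(H_x,H_y,H_z,H_w\), with \(H_i^2=0\) and \(H_iH_j=2\), give \(\rho\ge4\) and the factor \(t^2-14t+1\) for \emph{every} good parameter. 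This matters because the final parameter is selected by KAM, not generically.

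Second, and this is the heart of the proof, you do not exhibit a degenerate member on which the dynamics is an exact rotation. The paper's singular rotation model is the reducible surface cut out by \(P_0Q_{t_2}\) and \(P_{t_1}Q_0\). Its real locus is \(\Gamma(\id)\sqcup\Gamma(\Rot_{-\nu(\bt)})\), contained in the smooth locus. Both projections restrict to split double covers of \(\T^2\), and the sheet exchanges swap the two tori. Their composition acts by \(\Rot_{\pm\nu(\bt)}\), so the nondegeneracy you worry about is automatic from the two free parameters \(\bt\).

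Your picture, in which each \(M_\ell\) folds over an annular region and the involutions act on it as reflections, is incompatible with the conclusion when \(T\) is a product of two deck involutions, as it is in the paper. Suppose a deck involution \(\sigma\) preserved \(M_\ell\). Its fixed set there would be a union of circles or empty, since the branch curve of a smooth double cover is smooth. But \(\sigma T\sigma=T^{-1}\), and in coordinates where \(T|_{M_\ell}\) is a Diophantine rotation, any homeomorphism reversing it has the form \(u\mapsto -u+c\), which has exactly four fixed points. So the involutions must exchange the tori. This, rather than an orientation pattern, is what produces the sign \((-1)^\ell\): \(A(M_0)=M_1\) and \(ATA=T^{-1}\).

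\textbf{Gap 2: the smoothing and KAM step.} Your worry about quantitative, possibly computer-assisted smallness bounds reflects a missing idea: none are needed. The paper perturbs along a pencil \(c(\bt)+\lambda v_0\). It chooses \(v_0\) so that a polynomial cutting out the complement of \(\cG\) restricts to \(\lambda^k(a(\bt)+\lambda R(\bt,\lambda))\) with \(a\ne0\) on a disk. Then \(s(\bt,\lambda)\in\cG\) for all \(\bt\) in that disk and all \(0<\abs{\lambda}<\delta_0\). The implicit function theorem carries the two tori along, and \(\lambda\) may be taken arbitrarily small. The uniformity in \(\bt\) is essential, because \(\bt\) is adjusted only after \(\lambda\) is fixed. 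Otherwise the KAM-selected parameter could land on a singular surface.

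The adjustment itself should target Herman's correction term \(\btheta\) in \(g=\Rot_{\btheta}\circ\psi\circ\Rot_{\balpha}\circ\psi^{-1}\), not an unspecified ``rotation vector'', which is not even single-valued in general. That term is only known to be continuous in \(g\), so no submersion is available. The paper instead uses \(\Theta(\bt,0)=\nu(\bt)-\balpha\) and homotopy invariance of the Brouwer degree. Your area-preservation remark could supply a smooth substitute, namely the mean rotation vector of the invariant area. You would still have to prove that its equality with \(\balpha\) forces \(\btheta=0\).
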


We first describe the idea of the construction. A more detailed
\hyperref[par:proof-sketch]{sketch of the proof} appears later in this
introduction.

The surface \(X\) belongs to an algebraic family of K3 surfaces with a
natural automorphism. For a general member of this family, the automorphism
has positive entropy. The main task is therefore to control the
real dynamics.
For this, we use a degeneration.
The real dynamics of a singular fiber controls that of nearby smooth fibers.
This strategy appeared in Moncet's work \cite{Moncet2012}*{Section~5.3}
and in the work of Filip--Tosatti \cite{FilipTosatti2024}*{Section~2.4}.

The real loci of our singular fibers consist of two tori, on which the natural dynamics acts by
explicit rotations.
The two tori persist in nearby smooth fibers, where the dynamics on each
torus is close to a rotation.
By Herman's KAM theorem, this dynamics is conjugate to a fixed Diophantine
rotation up to a correction term in \(\R^2\).
Once we make this term vanish, we obtain real-analytic conjugacies to
rotations on the real locus.
They extend holomorphically and give the invariant biannuli in
property~\textup{(iii)}.
The extension argument also appeared in
\cite{Moncet2013}*{Proposition~1.2}.

A \emph{Fatou component} is a connected component of \(\Fat(T)\).
The two invariant biannuli in \Cref{thm:real-main} are disjoint, but this
alone does not separate the Fatou components containing them. The next
result shows that the tori \(M_0\) and \(M_1\) lie in distinct Fatou
components. It also records how an
involution couples their dynamics.

\begin{proposition}\label[proposition]{prop:fatou-components-and-coupling}
For the pair \((X,T)\) constructed in \Cref{thm:real-main}, let
\(U_\ell\) be the Fatou component containing \(M_\ell\), for \(\ell=0,1\).
Then \(U_0\ne U_1\). In particular, \(\Fat(T)\) is disconnected.

There is an involution \(A\in\operatorname{Aut}(X)\), defined over
\(\R\), such that
\[
A(M_0)=M_1,\qquad A(U_0)=U_1,\qquad
A\circ T\circ A=T^{-1}.
\]
The restriction \(A|_{M_0}\) conjugates \(T|_{M_0}\) to
\(T^{-1}|_{M_1}\).
\end{proposition}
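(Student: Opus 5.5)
The plan is to read off the involution \(A\) from the structure of the family, and then to separate \(U_0\) from \(U_1\) using the rotation vectors supplied by \Cref{thm:real-main}.

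\textbf{Construction of \(A\).} The natural automorphism \(T\) on the family is expected to be a product of two involutions, \(T=\iota_1\circ\iota_2\). For instance, these could be deck involutions of two double covers, or covering involutions of two elliptic fibrations, all defined over \(\R\). I take \(A\coloneqq\iota_1\). Then
\[
A\circ T\circ A=\iota_1\iota_1\iota_2\iota_1=\iota_2\iota_1=T^{-1}.
\]
Since \(A\) is defined over \(\R\), it preserves \(X_{\R}(\R)=M_0\sqcup M_1\), so it either fixes each torus or swaps them. To decide which, I would pass to the singular fiber of the degeneration, where the two tori and the involutions are explicit, and check that \(\iota_1\) exchanges the two real components there. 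The swapping persists to nearby smooth fibers by continuity of the real locus in the family. This gives \(A(M_0)=M_1\).

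\textbf{Conjugacy statement.} From \(A T A=T^{-1}\) and \(A(M_0)=M_1\) we get \(A\circ T|_{M_0}=T^{-1}|_{M_1}\circ A\). So \(A|_{M_0}\) conjugates \(T|_{M_0}\) to \(T^{-1}|_{M_1}\). This is consistent with the rotation vectors \(\balpha\) on \(M_0\) and \(-\balpha\) on \(M_1\), hence \(\balpha\) for \(T^{-1}|_{M_1}\). Because \(A\) conjugates \(T\) to \(T^{-1}\) and \(\Fat(T)=\Fat(T^{-1})\), \(A\) preserves \(\Fat(T)\) and permutes its components. Therefore \(A(U_0)\) is the component containing \(A(M_0)=M_1\), namely \(U_1\).

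\textbf{\(U_0\neq U_1\).} This is the main step. On a Fatou component \(U\) of a volume-preserving automorphism, the closure \(G\) of \(\{T^n\}\) in \(\operatorname{Aut}(U)\) is a compact abelian Lie group, by standard arguments (Cantat, Ueda). If \(U_0=U_1=U\), then \(G\) contains the torus closures of the rotations on \(W_0\) and on \(W_1\). I would argue that the identity component \(G^0\) is a torus \(\T^k\) with \(k\le2\), acting holomorphically. Since \(\balpha\) is Diophantine, its orbit is dense in a \(2\)-torus, so \(k=2\).

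The \(G^0\)-action then linearizes on the \(W_\ell\), and \(T\) corresponds to the element \(\balpha\) on \(W_0\) and \(-\balpha\) on \(W_1\). In a single \(\T^2\)-action on the connected set \(U\), the generic orbits of \(G^0\) are the real tori in \(W_0\) and \(W_1\), and the element \(g\in G^0\) represented by \(T\) is fixed. Both identifications \(j_0\) and \(j_1\) come from the same \(G^0\)-action, up to an automorphism of \(\T^2\) given by a matrix \(P\in GL_2(\Z)\) that relates the two parametrizations. So we would need \(P\balpha\equiv-\balpha\), and we must rule this out.

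I expect the real structure, or orientation, to do this. The antiholomorphic conjugation \(\sigma\) fixes both tori pointwise and commutes with \(T\), so it normalizes \(G^0\) and acts on it as inversion. I would compare the orientations that the holomorphic charts \(j_\ell\) induce via \((z_1,z_2)\mapsto\log\) on the lattice of one-parameter subgroups. The hope is that \(P\) is forced to be \(\pm\) a specific matrix, such as the identity, for which \(P\balpha\equiv-\balpha\) would give \(2\balpha\in(2\pi\Z)^2\), contradicting the Diophantine condition.

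If this rigidity argument fails, the fallback is a topological one. I would use the explicit entropy class and the action on \(H_2\), showing that the classes \([M_0]\) and \([M_1]\), or their images in \(H_2(U)\), are incompatible with a single \(G^0\)-orbit structure. This identification of \(P\) is where I expect the real difficulty.
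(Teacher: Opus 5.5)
Your construction of \(A\) as the deck involution of \(\pi_{yw}\), the identity \(ATA=T^{-1}\), the swap \(A(M_0)=M_1\), the deduction \(A(U_0)=U_1\), and the conjugacy statement are correct and match the paper. The gap is the main step, \(U_0\neq U_1\), where the rotation-vector comparison cannot produce a contradiction.

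Suppose \(U_0=U_1=U\), and index \(G\cong\T^2\) so that \(g_{\bbeta}\circ j_0=j_0\circ R_{\bbeta}\) on \(\cA_r\). Passing to the limit in \(AT^nA=T^{-n}\) gives \(Ag_{\bbeta}A=g_{-\bbeta}\). Hence \(g_{\bbeta}\circ j_1=j_1\circ R_{-\bbeta}\) for \(j_1=A\circ j_0\). So the involution itself forces your matrix to be \(P=-I\), and \(P\balpha=-\balpha\) then holds identically. Opposite rotation vectors are exactly what a single rotation domain containing both tori would show.

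In fact \(P\) is not an invariant at all. The map \((z_1,z_2)\mapsto(1/z_1,1/z_2)\) preserves \(\cA_r\) and conjugates \(R_{\balpha}\) to \(R_{-\balpha}\), so \(T|_{M_1}\) is also conjugate to \(\Rot_{\balpha}\). Orientations alone do not help either: \(-I\) preserves the orientation of \(\T^2\), so \(A\colon M_0\to M_1\) respects the orientations induced by \(G\).

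Your remark on complex conjugation is also wrong. \(T\) is defined over \(\R\), so \(\sigma\) commutes with \(T\) and hence with \(G\); it is \(A\), not \(\sigma\), that inverts \(G\). The fallback via the entropy class and \(H_2\) is not yet an argument.

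The missing ingredient is the holomorphic two-form and the sign \(A^*\Omega=-\Omega\) from \Cref{prop:k3-entropy}. The Diophantine property only serves to give \(G\cong\T^2\).

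\begin{itemize}
\item Since \(T^*\Omega=\Omega\), the group \(G\) preserves \(\Omega\).
\item Let \(\xi_1,\xi_2\) be the commuting holomorphic generators of the \(G\)-action on \(U\). Cartan's formula shows that \(h\coloneqq\Omega(\xi_1,\xi_2)\) is constant on \(U\).
\item This constant is nonzero, because \(\xi_1,\xi_2\) are \(\C\)-linearly independent on \(W_0\) and \(\Omega\) has no zeros.
\item Because \(A\) inverts \(G\), we have \(dA(\xi_i)=-\xi_i\circ A\). The two signs cancel, and \(A^*\Omega=-\Omega\) gives \(h\circ A=-h\), contradicting constancy.
\end{itemize}

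Your homological fallback can be rescued with the same input. Both tori are \(G\)-orbits in the connected set \(U\), so they are homologous when oriented by \(G\). Since \(A\) maps \(M_0\) onto \(M_1\) respecting these orientations, \(\int_{M_1}\Omega=\int_{M_0}A^*\Omega=-\int_{M_0}\Omega\). Homology instead gives \(\int_{M_1}\Omega=\int_{M_0}\Omega\). This is a contradiction, because \(\int_{M_0}\Omega\) is a nonzero multiple of the nonzero constant \(h\). Either way, what separates the components is the anti-symplectic sign of \(A\), not the rotation vectors.
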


The proof is given at the end of \Cref{sec:proof-main}.

\begin{remark}
\Cref{thm:real-main} also shows that the real locus is non-empty and
lies entirely in the Fatou set. Although the complex automorphism has
positive entropy, its restriction to the real locus has entropy zero.
These two properties answer two questions of Cantat affirmatively
\cite{Cantat2018}*{Questions~3.5(2) and (3)}.
\end{remark}

\begin{remark}
The disjoint \(T\)-invariant open sets \(W_0\) and \(W_1\) in
\Cref{thm:real-main} imply that no \(T\)-orbit is dense.
On each \(W_\ell\), the automorphism \(T\) is conjugate to a rotation
without periodic points. Hence all periodic points lie in the complement of
\(W_0 \cup W_1\), and the set of periodic points is not dense either.
\end{remark}

For \(0 \ne \Omega \in \rH^0(X,K_X)\), the \emph{canonical volume} of
\(X\) is the probability measure
\begin{equation}\label{eq:canonical-volume}
\vol_X \coloneqq
\frac{\Omega \wedge \overline\Omega}
{\int_X \Omega \wedge \overline\Omega}.
\end{equation}
It is independent of the choice of \(\Omega\) and invariant under every
automorphism of \(X\).

\begin{corollary}\label[corollary]{cor:canonical-volume-nonergodic}
For the pair \((X,T)\) obtained in \Cref{thm:real-main}, the canonical
volume \(\vol_X\) is not ergodic with respect to \(T\). More precisely,
the disjoint \(T\)-invariant open sets \(W_0\) and \(W_1\) have positive
canonical volume, so
\[
0 < \vol_X(W_\ell) < 1
\qquad \text{for } \ell=0,1.
\]
\end{corollary}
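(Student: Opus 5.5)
The corollary reduces to two facts: each \(W_\ell\) has positive canonical volume, and \(W_0\), \(W_1\) are disjoint \(T\)-invariant sets. Both facts are already supplied by \Cref{thm:real-main}(iii).

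\textbf{Positivity.} By \Cref{thm:real-main}(iii), \(W_\ell = j_\ell(\cA_r)\) is the image of the holomorphic embedding \(j_\ell\), so it is a non-empty open subset of \(X\). Fix a non-zero holomorphic \(2\)-form \(\Omega\) on the K3 surface \(X\). Since \(K_X\) is trivial, \(\Omega\) vanishes nowhere. In local holomorphic coordinates write \(\Omega = f\,dz_1\wedge dz_2\) with \(f\) nowhere zero. Then \(\Omega\wedge\overline\Omega = \abs{f}^2\,dz_1\wedge dz_2\wedge d\bar z_1\wedge d\bar z_2\), which is a constant multiple of \(\abs{f}^2\) times the Lebesgue volume form. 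After choosing the sign convention that makes it positive, \(\Omega\wedge\overline\Omega\) is a smooth volume form. It is strictly positive at every point, with finite total mass because \(X\) is compact. Consequently \(\vol_X\) in \eqref{eq:canonical-volume} gives positive mass to every non-empty open set, and \(\vol_X(W_\ell)>0\) for \(\ell=0,1\).

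\textbf{Strict upper bound.} \Cref{thm:real-main}(iii) states that \(W_0\) and \(W_1\) are disjoint. Hence \(\vol_X(W_0)\le 1-\vol_X(W_1)<1\), and symmetrically \(\vol_X(W_1)<1\).

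\textbf{Invariance and non-ergodicity.} The relation \(T\circ j_\ell = j_\ell\circ R_{(-1)^\ell\balpha}\) holds, and \(R_{(-1)^\ell\balpha}\) maps \(\cA_r\) bijectively onto itself because it preserves the moduli \(\abs{z_k}\). Therefore \(T(W_\ell)=W_\ell\). Each \(W_\ell\) is thus a measurable \(T\)-invariant set whose \(\vol_X\)-measure lies strictly between \(0\) and \(1\). By definition, this means \(\vol_X\) is not ergodic for \(T\).

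No step here is a genuine obstacle. The only points needing care are that the canonical volume has a strictly positive density, which follows from \(\Omega\) being nowhere vanishing, and that invariance of \(W_\ell\) follows from \(R_{(-1)^\ell\balpha}\) preserving \(\cA_r\).
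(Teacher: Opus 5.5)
Your proof is correct and is the direct deduction from \Cref{thm:real-main}\textup{(iii)} that the paper intends; the paper gives no separate proof of this corollary. The ingredients are the same: \(W_0\) and \(W_1\) are disjoint, nonempty, open and \(T\)-invariant, and \(\vol_X\) has a smooth, everywhere positive density because \(\Omega\) vanishes nowhere on the K3 surface.
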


Next, we relate the Fatou set to the canonical currents of \(T\).
Let \(\lambda_1\coloneqq\lambda_1(T)>1\) be the first dynamical degree of
\((X,T)\).
Recall from \cite{Cantat2001}*{Proposition~3.4} that the Green currents of
\(T\) are the weak limits
\[
\eta_\pm\coloneqq\lim_{n\to\infty}
\lambda_1^{-n}(T^{\pm n})^*\omega.
\]
Here \(\omega\) is a suitable K\"ahler form.
On \(\Fat(T)\), equicontinuity and Cauchy estimates give locally uniform
bounds for the pullbacks in this formula. Thus both Green currents vanish
on \(\Fat(T)\). So does the measure of maximal entropy
\(\mu_T\coloneqq\eta_+\wedge\eta_-\); see
\cite{Cantat2001}*{Theorem~6.2} and
\cite{DinhSibony2005}*{Sections~3--5}.

\begin{corollary}\label[corollary]{cor:maximal-entropy-support}
For the pair \((X,T)\) obtained in \Cref{thm:real-main}, the support of
the measure of maximal entropy \(\mu_T\) does not have full Lebesgue measure.
More precisely, the open sets \(W_0\) and \(W_1\) satisfy
\[
\supp(\mu_T)\cap(W_0\cup W_1)=\varnothing.
\]
\end{corollary}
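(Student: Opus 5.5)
The plan is to deduce the corollary directly from \Cref{thm:real-main}(iii) together with the vanishing of the Green currents on the Fatou set recalled just before the statement. The key fact is that \(W_0\cup W_1\subset\Fat(T)\) and that \(W_0\cup W_1\) is open, so it suffices to show \(\mu_T(W_0\cup W_1)=0\). Since the support of a measure is the complement of the largest open set of measure zero, this gives \(\supp(\mu_T)\cap(W_0\cup W_1)=\varnothing\).

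To see that \(\mu_T\) vanishes on \(\Fat(T)\), I would argue as sketched before the corollary. Fix a relatively compact open set \(V\Subset\Fat(T)\). By equicontinuity of \(\{T^n\}_{n\in\Z}\) on a neighborhood of \(\overline V\), all iterates map small coordinate balls into small coordinate balls. The Cauchy estimates then bound the derivatives \(DT^{\pm n}\) uniformly on \(V\). Hence \((T^{\pm n})^*\omega\) is bounded on \(V\) by a constant independent of \(n\), and \(\lambda_1^{-n}(T^{\pm n})^*\omega\to0\) uniformly on \(V\), using \(\lambda_1>1\). Since \(\eta_\pm\) are weak limits of these forms, we get \(\eta_\pm|_V=0\).

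For \(\mu_T=\eta_+\wedge\eta_-\), I would not take a naive product of currents. Instead I would use the local potential description: on \(V\), \(\eta_+=dd^c g_+\) with \(g_+\) continuous, and \(\eta_+|_V=0\) means \(g_+\) is pluriharmonic on \(V\). Then \(\eta_+\wedge\eta_-\coloneqq dd^c(g_+\eta_-)=0\) on \(V\), because \(dd^c(g_+\eta_-)=dd^c g_+\wedge\eta_-=0\) when \(g_+\) is smooth pluriharmonic and \(\eta_-\) is closed. Alternatively, I can simply cite \cite{Cantat2001}*{Theorem~6.2} and \cite{DinhSibony2005}. Exhausting \(W_0\cup W_1\) by such \(V\) then gives \(\mu_T(W_0\cup W_1)=0\).

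Finally, the first sentence of the corollary asks that \(\supp(\mu_T)\) not have full Lebesgue measure. This follows because \(W_0\) is a non-empty open set: it is the image of \(\cA_r\) under the embedding \(j_0\), and so it has positive Lebesgue measure, in fact positive canonical volume as in \Cref{cor:canonical-volume-nonergodic}. The only mildly delicate step is the uniform derivative bound on the Fatou set. Here equicontinuity is applied on a slightly larger compact neighborhood so that Cauchy estimates hold on \(V\), and compactness of \(X\) lets finitely many charts suffice.
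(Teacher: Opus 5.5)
Your proposal is correct and follows essentially the same route as the paper. Just before the corollary, the paper observes that equicontinuity and Cauchy estimates give locally uniform bounds on \((T^{\pm n})^*\omega\) over \(\Fat(T)\), so \(\eta_\pm\) and hence \(\mu_T=\eta_+\wedge\eta_-\) vanish there, and it then uses \(W_0\cup W_1\subset\Fat(T)\). Your local-potential argument, with \(g_+\) pluriharmonic on small balls, spells out the step the paper leaves to Cantat and Dinh--Sibony.
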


Tosatti conjectured \cite{Tosatti2021}*{Conjecture~7.3} that the support of
the measure of maximal entropy of every positive-entropy automorphism of a
projective K3 surface has full Lebesgue measure. Filip--Tosatti later
constructed canonical currents on projective K3 surfaces that are not fully
supported, while leaving open whether the measure of maximal entropy is fully
supported \cite{FilipTosatti2024}*{Introduction and Theorems~1--2}.
\Cref{cor:maximal-entropy-support} disproves Tosatti's conjecture.

\vspace{1em}
\noindent\phantomsection\label{par:proof-sketch}\textbf{Sketch of the proof.}
We now implement the idea above, using the notation of the later sections.
We consider a family of complete intersections in
\(P_\R\coloneqq(\P^1_\R)^4\), with coordinates \((x,y,z,w)\).
Set \(L_\R\coloneqq\cO_{P_\R}(1,1,1,1)\).
The parameter space for this family is
\(\cV\coloneqq\rH^0(P_\R,L_\R)^{\oplus2}\),
whose elements are ordered pairs of sections of \(L_\R\).
For \(s\in\cV\), let \(X_{s,\R}\subset P_\R\) be the common zero set of
the two sections. It is a complete intersection of two real divisors of
multidegree \((1,1,1,1)\) (\textsection~\ref{sec:real-family}).
Write
\(X_s\coloneqq X_{s,\R}\times_{\R}\C\).
For parameters \(s\) in a nonempty Zariski open set \(\cG\), called the
\emph{good locus}, the surfaces \(X_{s,\R}\) are smooth real projective
K3 surfaces.
For these parameters, the two coordinate projections onto the
\((y,w)\) and \((x,z)\) factors are finite flat double covers.
Denote their deck involutions by \(A_{s,\R}\) and
\(B_{s,\R}\), respectively, and set
\[
T_{s,\R}\coloneqq B_{s,\R}\circ A_{s,\R},
\qquad
T_s\coloneqq T_{s,\R}\times_{\R}\C.
\]
The action on divisor classes gives the topological entropy
\(h_{\mathrm{top}}(T_s) = \log(7 + 4\sqrt3)\) for every \(s\in\cG\)
(\textsection~\ref{sec:complex-dynamics}).
It remains to find a good parameter whose real locus consists of tori on
which \(T_{s,\R}\) is conjugate to Diophantine rotations.

We find such parameters near singular surfaces whose real dynamics is
explicit. Identify \(\P^1(\R)\) with \(\T\) by the Cayley map
\(\theta\mapsto[\cos(\theta/2):\sin(\theta/2)]\).
This gives \(P_\R(\R)\simeq\T^2_{x,z}\times\T^2_{y,w}\), where
\(\T^2_{x,z}\coloneqq\T_x\times\T_z\) and
\(\T^2_{y,w}\coloneqq\T_y\times\T_w\).
For \(\bt=(t_1,t_2)\in\R^2\) with \(t_1t_2\ne0\), we construct reducible
surfaces \(X_{c(\bt),\R}\), which we call the \emph{singular rotation
models}. Here \(c\colon\R^2\to\cV\) is an affine map.
The real locus of each model consists of the graphs of \(\id\) and
\(\Rot_{-\nu(\bt)}\) from \(\T^2_{x,z}\) to \(\T^2_{y,w}\), where
\[
\nu(\bt)\coloneqq(2\arctan t_1,2\arctan t_2).
\]
Both graphs lie in the smooth locus of the model.
We parametrize both graphs by their \((x,z)\)-coordinates.
Over each point of \(\T^2_{y,w}\) or \(\T^2_{x,z}\), the corresponding
projection from the real locus has exactly two preimages, one on each
graph.
The \emph{sheet exchange} of the projection swaps these two points.
Apply the \(yw\)-exchange first and then the \(xz\)-exchange, as in
\(T_{s,\R}=B_{s,\R}\circ A_{s,\R}\).
This composition acts on the two graphs by rotations with opposite vectors
\(\nu(\bt)\) and \(-\nu(\bt)\)
(\textsection~\ref{sec:singular-rotation-models}).

We smooth these models by perturbing the defining sections in a fixed
direction \(v_0\in\cV\setminus\{0\}\). The perturbed parameters have the
form
\[
s(\bt,\lambda)\coloneqq c(\bt)+\lambda v_0.
\]
Here \(\bt\) adjusts the rotation, and \(\lambda\in\R\) controls the
smoothing. We choose \(v_0\), an open disk \(D_0\subset\R^2\), and
\(\delta_0>0\) so that \(s(\bt,\lambda)\in\cG\) for every
\(\bt\in\overline{D_0}\) and every
\(\lambda\in(-\delta_0,\delta_0)\setminus\{0\}\).
This uniformity in \(\bt\) is essential. The parameter \(\bt\) will be
adjusted after \(\lambda\) is fixed, and the adjusted parameter must remain
good.
Fix a Diophantine vector \(\balpha\in\nu(D_0)\), and set
\(\bt_0\coloneqq\nu^{-1}(\balpha)\).

Choose \(0<\delta<\delta_0\) sufficiently small.
For \(\bt\) near \(\bt_0\) and \(\lambda\in(-\delta,\delta)\), the
real-analytic implicit function theorem represents the real locus
\(X_{s(\bt,\lambda),\R}(\R)\) as the disjoint union of the graphs of
real-analytic diffeomorphisms \(g_0(\bt,\lambda)\) and
\(g_1(\bt,\lambda)\) from \(\T^2_{x,z}\) to \(\T^2_{y,w}\)
(\Cref{lem:graph-persistence}).
These maps depend real-analytically on the parameters and the torus
variable, and
\[
g_0(\bt,0)=\id,
\qquad g_1(\bt,0)=\Rot_{-\nu(\bt)}.
\]
The \(yw\)-exchange preserves the \((y,w)\)-coordinates, and the
\(xz\)-exchange preserves the \((x,z)\)-coordinates.
Thus their composition acts on the first graph, in its
\((x,z)\)-coordinates, as
\[
f_{\bt,\lambda}=g_1(\bt,\lambda)^{-1}\circ g_0(\bt,\lambda).
\]
For \(\lambda\ne0\), the parameter \(s=s(\bt,\lambda)\) is good.
The deck involutions \(A_{s,\R}\) and \(B_{s,\R}\) then restrict to the two
sheet exchanges. Hence \(f_{\bt,\lambda}\) represents the restriction of
\(T_{s,\R}\) to the first component.
At \(\lambda=0\), it equals \(\Rot_{\nu(\bt)}\).
Thus \(f_{\bt,\lambda}\) is a small real-analytic perturbation of
\(\Rot_{\nu(\bt)}\).

A small smoothing alone does not give the desired conjugacy. A perturbation
of the Diophantine rotation \(\Rot_{\balpha}\) need not be conjugate to
\(\Rot_{\balpha}\). Herman's KAM theorem writes it instead as
\(\Rot_{\btheta}\circ\psi\circ\Rot_{\balpha}\circ\psi^{-1}\) for a
real-analytic diffeomorphism \(\psi\) and a small vector \(\btheta\)
(\textsection~\ref{sec:herman-counterterm}).
The vector \(\btheta\) is the \emph{correction term}.
It lies in \(\R^2\), so its vanishing imposes two conditions.
We meet them with the two parameters in \(\bt\).
After shrinking the neighborhood of \(\bt_0\) and decreasing \(\delta\),
write \(\Theta(\bt,\lambda)\in\R^2\) for the correction term of
\(f_{\bt,\lambda}\). It depends continuously on \((\bt,\lambda)\).
At \(\lambda=0\), the map \(f_{\bt,0}=\Rot_{\nu(\bt)}\) is a rotation.
For a rotation near \(\Rot_{\balpha}\), the correction term is the
difference of the rotation vectors (\Cref{thm:herman}\textup{(ii)}).
Hence
\[
\Theta(\bt,0)=\nu(\bt)-\balpha.
\]
The map \(\nu\) is an orientation-preserving diffeomorphism.
Thus \(\Theta(\,\cdot\,,0)\) has topological degree \(1\) at zero on a small disk
centered at \(\bt_0\).
By homotopy invariance of the degree, after decreasing \(\delta\), for every
\(\lambda\in(-\delta,\delta)\setminus\{0\}\) there is a parameter
\(\bt_\lambda\) at which the correction term vanishes.
The uniform choice of smoothing ensures \(s\coloneqq s(\bt_\lambda,\lambda)\in\cG\).
Herman's theorem then conjugates \(T_{s,\R}\) to \(\Rot_{\balpha}\) on the
first torus.
The deck involution \(A_{s,\R}\) exchanges the tori and conjugates
\(T_{s,\R}\) to its inverse.
This gives the opposite rotation on the second torus and proves
property~\textup{(i)} of \Cref{thm:real-main}
(\textsection~\ref{sec:smoothing-kam}). Property~\textup{(ii)} and the
bound \(\rho(X_s)\geq4\) hold for every \(s\in\cG\).
Holomorphic extensions of the real-analytic conjugacies yield the
invariant biannuli in property~\textup{(iii)}
(\textsection~\ref{sec:torus-complexification}).

The algebraic family and its natural automorphisms were studied by
Bhargava--Ho--Kumar \cite{BhargavaHoKumar2016}*{Section~7} and
Hashimoto--Oda \cite{HashimotoOda2026}*{Section~5}.
Our contribution is the singular rotation model and a smoothing that
preserves a rotation with a prescribed Diophantine vector.

The paper first collects the analytic tools and then follows the
construction. \Cref{sec:kam-conjugacy} states these tools: Herman's normal form and the
complexification of an invariant rotation torus. \Cref{sec:algebraic}
constructs the K3 family and its deck involutions and computes the entropy.
\Cref{sec:smoothing} builds the singular rotation models and carries out
the smoothing with prescribed rotation. \Cref{sec:proof-main} assembles
these ingredients and separates the Fatou components.
\Cref{sec:metric-entropy-conjecture} discusses a conjecture on metric
entropy. \Cref{app:algebraic-details} describes the good locus.
\Cref{fig:complex-orbits} illustrates the dynamics with four orbits computed
in a numerical model.
\begin{figure}[H]
\centering
\includegraphics[width=0.8\textwidth]{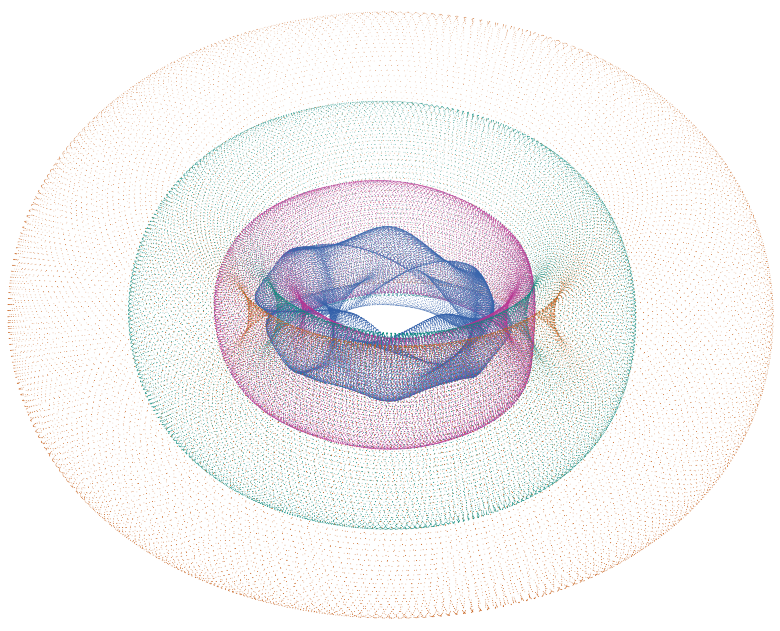}
\caption[Four numerically computed orbits.]{Four orbits in a numerical model
of \((X,T)\)\protect\footnotemark, each computed
for \(32{,}768\) forward iterations. Each color represents one orbit.}
\label{fig:complex-orbits}
\end{figure}
\footnotetext{In affine coordinates \((x,y,z,w)\), put
\(u=(1,y,x,xy)^{\mathsf t}\) and \(v=(1,w,z,zw)^{\mathsf t}\).
The numerical surface in \((\P^1)^4\) is defined by multihomogenizing
the following equations:
\[
\begin{aligned}
(y-x)\bigl(w-z+b(1+zw)\bigr)+10^{-3}u^{\mathsf t}M_1v&=0,\\
\bigl(y-x+a(1+xy)\bigr)(w-z)+10^{-3}u^{\mathsf t}M_2v&=0.
\end{aligned}
\]
The coefficient matrices are
\[
M_1=\begin{pmatrix}
-3&-7&3&-3\\
-5&0&8&-5\\
-7&-4&4&-3\\
5&5&-5&1
\end{pmatrix},\qquad
M_2=\begin{pmatrix}
-8&-4&-4&-4\\
-7&3&5&1\\
-9&1&3&3\\
8&4&4&4
\end{pmatrix}.
\]
The fixed rational parameters, rounded here, are
\(a\approx0.52228966285143913\) and
\(b\approx0.90837855261203148\).}

\vspace{1em}
\noindent\textbf{Notation and conventions.}
For a real variety \(Y_{\R}\), line bundle \(L_{\R}\), and morphism
\(f_{\R}\), we write \(Y=Y_{\R}\times_{\R}\C\), \(L=L_{\R}\times_{\R}\C\),
and \(f=f_{\R}\times_{\R}\C\) for their complexifications. The set
\(Y_{\R}(\R)\) of real points is called the \emph{real locus} and is
naturally viewed as a subset of \(Y(\C)\). A real morphism induces a map
between real loci, which we denote by the same symbol. This map agrees
with the restriction of its complexification.

For sections \(F_1,\ldots,F_r\) of line bundles on a scheme \(Y\),
\(V_Y(F_1,\ldots,F_r)\) denotes their common zero subscheme, with \(Y\)
omitted when the ambient space is clear.

Write \(\operatorname{Diff}^{\omega}_0(\T^2)\) for the group of
real-analytic diffeomorphisms isotopic to the identity, with the relative
\(C^q\)-topology whenever an integer \(q\geq0\) is specified.

For a bounded open set \(D\subset\R^n\), a continuous map
\(F\colon\overline D\to\R^n\), and a point \(y\notin F(\partial D)\),
\(\deg(F,D,y)\in\Z\) denotes the topological (Brouwer) degree
\cite{Deimling1985}*{Chapter~1}.
If \(F\) is \(C^1\) and \(y\) is a regular value, it is the sum of the
signs of the Jacobian determinant of \(F\) over \(F^{-1}(y)\).
It is invariant under homotopies that avoid \(y\) on \(\partial D\).
If \(\deg(F,D,y)\neq0\), then \(y\in F(D)\).

\vspace{1em}
\noindent\textbf{Use of generative AI.}
An initial version of the underlying idea was suggested by the
\href{https://github.com/frenzymath/Rethlas}{Rethlas agent}, using the
\texttt{gpt-5.6-sol} model, and was subsequently developed and verified by
the author. The author also used
\href{https://openai.com/codex/}{OpenAI Codex} and
\href{https://claude.com/claude-code}{Claude Code} for grammatical
corrections and stylistic polishing, and \href{https://chatgpt.com/}{ChatGPT Pro} to
assist in locating potentially relevant references. All AI-generated
suggestions, including bibliographic suggestions, were treated as provisional
and independently verified by the author. The author assumes full
responsibility for every argument, citation, and the final text.

\vspace{1em}
\noindent\textbf{Acknowledgements.}
The author thanks his advisor, Valentino Tosatti, for suggesting the topic,
for reading a preliminary draft, and for his constant support.
The author also thanks Eric Bedford, Serge Cantat, Jeffrey Diller, and
Curtis McMullen for helpful feedback on a preliminary draft.
The author is especially grateful to Serge Cantat for detailed suggestions.
They greatly improved the exposition and led to
\Cref{prop:fatou-components-and-coupling}.

\section{Herman's normal form and complexification}\label{sec:kam-conjugacy}

This section provides the two analytic ingredients for the proof of
\Cref{thm:real-main}: Herman's normal form
(\Cref{sec:herman-counterterm}) and the complexification of rotation tori
(\Cref{sec:torus-complexification}).

\subsection{Diophantine vectors and Herman's normal form}
\label{sec:herman-counterterm}

We first show that Diophantine vectors with the fixed exponent \(3\) are dense.
\begin{lemma}[Density of Diophantine vectors with exponent \(3\)]
\label[lemma]{lem:diophantine-choice}
The union \(\bigcup_{\gamma>0}\operatorname{DC}(\gamma,3)\) has full
Haar measure in \(\T^2\).
In particular, every nonempty open subset of \(\R^2\) contains a vector
\(\balpha\in\operatorname{DC}(\gamma,3)\) for some \(\gamma>0\).
The constant \(\gamma\) may depend on the chosen vector.
\end{lemma}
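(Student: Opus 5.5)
We plan a standard Borel--Cantelli style measure estimate on the complement. For \(\gamma>0\), let \(E_\gamma\subset\T^2\) denote the set of \([\balpha]\) that fail \eqref{eq:diophantine} with \(\tau=3\), so that \(E_\gamma=\bigcup_{\bfk\ne\bzero}E_{\gamma,\bfk}\), where
\[
E_{\gamma,\bfk}\coloneqq\{[\balpha]\in\T^2:\operatorname{dist}(k_1\alpha_1+k_2\alpha_2,2\pi\Z)<\gamma\norm{\bfk}_\infty^{-3}\}.
\]
The complement of \(\bigcup_{\gamma>0}\operatorname{DC}(\gamma,3)\) equals \(\bigcap_{\gamma>0}E_\gamma\), since the sets \(E_\gamma\) decrease as \(\gamma\) decreases. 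It therefore suffices to show that the normalized Haar measure \(m(E_\gamma)\) tends to \(0\) as \(\gamma\to0\).

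First, we fix \(\bfk\ne\bzero\) and compute the measure of \(E_{\gamma,\bfk}\). The map \(\phi_{\bfk}\colon\T^2\to\T\), \([\balpha]\mapsto[k_1\alpha_1+k_2\alpha_2]\), is a surjective continuous homomorphism of compact groups. It therefore pushes normalized Haar measure on \(\T^2\) to normalized Haar measure on \(\T\); one can check this by translation invariance of the pushforward, or by computing Fourier coefficients. Now \(E_{\gamma,\bfk}\) is the preimage of an arc of length at most \(2\gamma\norm{\bfk}_\infty^{-3}\) in \(\T=\R/2\pi\Z\). Hence
\[
m(E_{\gamma,\bfk})\le\frac{\gamma}{\pi}\norm{\bfk}_\infty^{-3}.
\]

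Second, we sum over \(\bfk\). The number of \(\bfk\in\Z^2\) with \(\norm{\bfk}_\infty=n\) is \(8n\). This gives
\[
m(E_\gamma)\le\frac{\gamma}{\pi}\sum_{n\ge1}8n\cdot n^{-3}=\frac{8\gamma}{\pi}\cdot\frac{\pi^2}{6},
\]
which tends to \(0\) as \(\gamma\to0\). The exponent \(\tau=3\) is exactly what makes this series converge, because \(\sum n\cdot n^{-\tau}\) requires \(\tau>2\). This step is the only place where the choice of exponent matters, and it is the one computation to get right. Consequently the intersection \(\bigcap_{\gamma>0}E_\gamma\) is null, which proves full measure.

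For the second assertion, let \(U\subset\R^2\) be nonempty and open. It contains a small open square whose image in \(\T^2\) is an open set of positive Haar measure. A full-measure set must meet this image, so some \([\balpha]\) in it lies in \(\operatorname{DC}(\gamma,3)\) for some \(\gamma>0\). Lifting to the representative \(\balpha\) in the square is legitimate because the Diophantine condition depends only on \([\balpha]\). The constant \(\gamma\) depends on the point chosen, as the statement allows.
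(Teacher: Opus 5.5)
Your proof is correct. It is the standard measure estimate that the paper does not write out but cites from Herman's Annex, Complement~(2.5): a union bound over the \(8n\) frequency vectors of sup-norm \(n\), which is summable exactly because the exponent \(3\) exceeds \(2\), followed by the observation that a full-measure set meets the positive-measure image of any open square.
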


\begin{proof}
The proof is standard; see \cite{Herman1979}*{Annex, Complement~(2.5), p.~204} for example.
\end{proof}

Herman's local KAM theorem applies to a Diophantine rotation.
It expresses a small perturbation as a conjugate of that rotation
followed by a small rotation.
The vector of the extra rotation is the \emph{correction term}.
We need its continuous dependence on the map and its value on nearby rotations.
The following formulation for maps of \(\T^2\) includes the
continuous choice provided by
\cite{Herman1979}*{Annex, Theorem~2.2, p.~203}.

\begin{theorem}[Herman's normal form with continuous correction term]
\label[theorem]{thm:herman}
\label[theorem]{prop:continuous-herman-counterterm}
Fix \(\gamma>0\), \(\tau\geq2\), and
\(\balpha\in\operatorname{DC}(\gamma,\tau)\). For every
\(\eta>0\), there are an integer \(q\geq1\) and a \(C^q\)-open neighborhood
\(\cU_{\balpha}\subset\operatorname{Diff}^{\omega}_0(\T^2)\) of
\(\Rot_{\balpha}\) with the following properties.
There is a continuous map for the \(C^q\)-topology
\[
\vartheta_{\balpha}\colon\cU_{\balpha}\longrightarrow(-\eta,\eta)^2.
\]
\begin{enumerate}[label=\textup{(\roman*)}]
\item\textup{\textbf{Normal form.}}
For every \(g\in\cU_{\balpha}\), there is a
\(\psi\in\operatorname{Diff}^{\omega}_0(\T^2)\) such that
\begin{equation}\label{eq:herman-normal-form}
g=\Rot_{\btheta}\circ\psi\circ\Rot_{\balpha}\circ\psi^{-1},
\qquad \btheta=\vartheta_{\balpha}(g).
\end{equation}
\item\textup{\textbf{Nearby rotations.}}
There is a radius \(r_{\balpha}\in(0,\pi)\) with the following property.
If \(\bbeta\in\R^2\) satisfies
\(\norm{\bbeta-\balpha}_\infty<r_{\balpha}\), then
\(\Rot_{\bbeta}\in\cU_{\balpha}\) and
\begin{equation}\label{eq:herman-rotation-counterterm}
\vartheta_{\balpha}(\Rot_{\bbeta})=\bbeta-\balpha.
\end{equation}
\end{enumerate}
\end{theorem}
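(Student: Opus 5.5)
The plan is to deduce the theorem from Herman's local theorem with parameters, \cite{Herman1979}*{Annex, Theorem~2.2, p.~203}, and not to redo a KAM scheme. Near \(\Rot_{\balpha}\), Herman's theorem solves the equation
\[
g=\Rot_{\btheta}\circ\psi\circ\Rot_{\balpha}\circ\psi^{-1}
\]
for the pair \((\psi,\btheta)\). We normalize \(\psi\), for example by requiring \(\psi-\id\) to have zero mean on \(\T^2\). With this normalization the solution is unique, and it depends continuously on \(g\) in a finite \(C^q\)-topology. Here \(q\) depends only on \(\tau\) and on the loss of derivatives in the scheme. It is at least of order \(\tau+2\), and \(q=\lceil 2\tau\rceil+3\) is a safe choice.

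The map \(g\) is real-analytic, so \(\psi\) is also real-analytic, by Herman's analytic version or by the analytic hypoellipticity of the cohomological equation. We define \(\vartheta_{\balpha}(g)\coloneqq\btheta\). By continuity at \(\Rot_{\balpha}\), where \(\btheta=0\) and \(\psi=\id\), we can shrink the neighborhood \(\cU_{\balpha}\) until \(\vartheta_{\balpha}\) takes values in \((-\eta,\eta)^2\). This gives (i).

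Step (ii) needs two things: an explicit solution for rotations, and uniqueness. Take \(\bbeta\) close to \(\balpha\). Then the pair \(\psi=\id\), \(\btheta=\bbeta-\balpha\) solves the equation exactly for \(g=\Rot_{\bbeta}\), since rotations commute. This \(\psi\) satisfies the zero-mean normalization. By the uniqueness part of Herman's theorem inside \(\cU_{\balpha}\), we get \(\vartheta_{\balpha}(\Rot_{\bbeta})=\bbeta-\balpha\). We take \(r_{\balpha}<\pi\) small enough that \(\norm{\bbeta-\balpha}_\infty<r_{\balpha}\) forces \(\Rot_{\bbeta}\in\cU_{\balpha}\). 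This is possible because \(\bbeta\mapsto\Rot_{\bbeta}\) is continuous into the \(C^q\)-topology. We also require \(r_{\balpha}\le\eta\), so that the value lies in \((-\eta,\eta)^2\).

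The main obstacle is the uniqueness statement. Herman's theorem gives existence, but uniqueness of the normalized pair \((\psi,\btheta)\) is what makes \(\vartheta_{\balpha}\) a well-defined map. I would obtain it from the implicit-function-theorem structure of the proof. The linearized operator
\[
(\hat\psi,\hat\btheta)\mapsto \hat\psi\circ\Rot_{\balpha}-\hat\psi+\hat\btheta
\]
is injective on zero-mean \(\hat\psi\) because \(\balpha\) is Diophantine. The Nash--Moser/Newton scheme then converges to a locally unique solution.

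A second route to uniqueness goes through rotation vectors. Suppose two solutions \((\psi,\btheta)\) and \((\psi',\btheta')\) are both close to \((\id,0)\). The relation
\[
\Rot_{\btheta}\circ\psi\circ\Rot_{\balpha}\circ\psi^{-1}=\Rot_{\btheta'}\circ\psi'\circ\Rot_{\balpha}\circ\psi'^{-1}
\]
can be rearranged so that \(\psi'^{-1}\circ\psi\) nearly commutes with \(\Rot_{\balpha}\). A direct Fourier-coefficient argument then forces \(\btheta=\btheta'\) and makes \(\psi'^{-1}\circ\psi\) a rotation; the normalization makes that rotation trivial.

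In either route, the correction term on rotations is identified through the explicit solution above. Continuity of \(\vartheta_{\balpha}\) is inherited from the continuous dependence of the fixed point of the scheme on \(g\).
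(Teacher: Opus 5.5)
Your argument is sound in outline, but it reaches (ii) by a different and heavier route than the paper.

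\textbf{How the two routes differ.} You derive (ii) from uniqueness of the normalized pair \((\psi,\btheta)\): the explicit solution \((\id,\bbeta-\balpha)\) for \(g=\Rot_{\bbeta}\) must then be the one Herman's theorem produces. The paper needs neither uniqueness nor any normalization of \(\psi\).

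It takes \(\vartheta_{\balpha}\) to be the continuous correction term that Herman's Theorem~2.2 already supplies. The statement only asks for some continuous map, so well-definedness is not an issue. It then computes the value on rotations directly. For \(g=\Rot_{\bbeta}\), the lifted normal form reads \(\Psi(\bu)+\bbeta=\Psi(\bu+\balpha)+\btheta\). Hence \(\Psi(\bu+n\balpha)=\Psi(\bu)+n(\bbeta-\btheta)\), and boundedness of \(\Psi-\id\) forces \(\btheta=\bbeta-\balpha\).

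This rotation-vector argument applies to every solution with \(\psi\) isotopic to the identity, whether or not it is close to \((\id,\bzero)\). With \(\bbeta=\balpha\) it also gives \(\vartheta_{\balpha}(\Rot_{\balpha})=\bzero\). That is what allows shrinking \(\cU_{\balpha}\) so the values lie in \((-\eta,\eta)^2\); your write-up takes this for granted. The \(2\pi\Z^2\) ambiguity is removed by fixing the lift \(G_g\) with displacement in \((-\pi,\pi)^2\), which is why \(r_{\balpha}<\pi\).

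\textbf{The uniqueness step.} If you keep your route, this step needs more than you give.

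Your first justification is not a proof. Injectivity of the linearized operator at \((\id,\bzero)\) does not by itself give local uniqueness for a problem with loss of derivatives. You would need tame invertibility on a whole neighborhood, which means using the internals of the Nash--Moser scheme rather than the cited statement.

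Your second justification can be made precise by averaging rather than a Fourier argument. Suppose \((\psi,\btheta)\) and \((\psi',\btheta')\) both solve. Then \(\phi=\psi'^{-1}\circ\psi=\id+w\) satisfies \(\phi\circ\Rot_{\balpha}=\kappa\circ\Rot_{\balpha}\circ\phi\), where \(\kappa=\psi'^{-1}\circ\Rot_{\btheta'-\btheta}\circ\psi'\). Integrating the lifted relation over \(\T^2\) kills the coboundary \(w\circ\Rot_{\balpha}-w\) and gives \(\abs{\btheta-\btheta'}\le\norm{D(\psi'^{-1})-I}_{C^0}\abs{\btheta-\btheta'}\). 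So \(\btheta=\btheta'\) once \(\psi'\) is \(C^1\)-close to the identity. Then \(\phi\) commutes with the minimal rotation \(\Rot_{\balpha}\), so it is a rotation.

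For your purpose, however, you may take \(\psi'=\id\). Then \(\kappa\) is a pure rotation, and the argument collapses to exactly the paper's, with no smallness hypothesis. The general uniqueness theorem you call the main obstacle can therefore be dropped entirely.

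\textbf{Trade-off.} Your approach would give a canonical, construction-independent \(\vartheta_{\balpha}\). The paper's gives a two-line proof using only the continuity and analytic-regularity clauses of Herman's theorem. Relatedly, analyticity of \(\psi\) should come from that analytic-regularity clause. Analytic solvability of the linear cohomological equation does not by itself give analyticity for the nonlinear equation.
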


\begin{proof}
Each \(g\) sufficiently \(C^0\)-close to \(\Rot_{\balpha}\) has a unique
real-analytic displacement
\(d_g\colon\T^2\to(-\pi,\pi)^2\) such that
\[
g([\bu])=[\bu+\balpha+d_g([\bu])].
\]
Define a lift of \(g\) by
\[
G_g(\bu)\coloneqq \bu+\balpha+d_g([\bu]).
\]
This lift depends continuously on \(g\) in the \(C^q\)-topology.
This is the local correspondence between lifts and torus maps described in
\cite{Herman1979}*{Annex, Section~1, p.~202}.

The vector \(\balpha/(2\pi)\) has Diophantine constant
\(\gamma/(2\pi)\) and exponent \(\tau\) in Herman's period-\(1\)
normalization. After rescaling to our period-\(2\pi\) coordinates,
\cite{Herman1979}*{Annex, Theorem~2.2, p.~203} applies to
\(G_g\) near \(\bu\mapsto \bu+\balpha\). Choose an integer \(q\) sufficiently
large. The continuity and analytic-regularity clauses give a
continuous correction term \(\btheta(g)\) and a real-analytic lift
\(\Psi_g\) of a diffeomorphism
\(\psi_g\in\operatorname{Diff}^{\omega}_0(\T^2)\), which satisfies
\begin{equation}\label{eq:herman-lifted-normal-form}
G_g\bigl(\Psi_g(\bu)\bigr)=\Psi_g(\bu+\balpha)+\btheta(g).
\end{equation}
Projecting \eqref{eq:herman-lifted-normal-form} to the torus gives
\eqref{eq:herman-normal-form}.

If \(g=\Rot_{\bbeta}\) with \(\bbeta\) close to \(\balpha\) in \(\R^2\),
then \(G_g(\bu)=\bu+\bbeta\). Iterating
\eqref{eq:herman-lifted-normal-form} gives
\[
\Psi_g(\bu+n\balpha)=\Psi_g(\bu)+n\bigl(\bbeta-\btheta(g)\bigr).
\]
Since \(\psi_g\) is isotopic to the identity, the displacement
\(\Psi_g-\id\) is periodic and hence bounded. Dividing the last
identity by \(n\) yields \(\btheta(g)=\bbeta-\balpha\).
In particular, \(\btheta(\Rot_{\balpha})=\bzero\). By continuity, we may shrink
the neighborhood \(\cU_{\balpha}\) so that
\(\btheta(g)\in(-\eta,\eta)^2\) throughout it. Choose \(r_{\balpha}\in(0,\pi)\)
small enough that all rotations with
\(\norm{\bbeta-\balpha}_\infty<r_{\balpha}\) lie in this neighborhood.
Taking \(\vartheta_{\balpha}=\btheta\) proves the theorem.
\end{proof}

\subsection{Complexification of an invariant rotation torus}
\label{sec:torus-complexification}

We now pass from a real-analytic rotation conjugacy on an invariant real
torus to a holomorphic conjugacy on a tubular neighborhood biholomorphic to
a biannulus \(\cA_r\).
Recall that \(\cA_r\) is the product of two copies of the annulus
\(e^{-r}<\abs{z}<e^r\), for \(r>0\).
We will apply this to each component of the real locus.
The extension relies on the following property of the tori in the real
locus.

A real submanifold \(M\) of a complex manifold \(Y\) is
\emph{maximally totally real} if its tangent spaces satisfy
\[
T_pY=T_pM\oplus iT_pM
\qquad \text{for } p\in M.
\]
The direct sum is taken over \(\R\).
In particular, \(\dim_{\R}M=\dim_{\C}Y\).

The following proposition slightly reformulates
\cite{Moncet2013}*{Proposition~1.2}.

\begin{proposition}[Complexification of an invariant rotation torus]
\label[proposition]{prop:herman-ring-complexify-torus}
Let \(Y\) be a complex surface and \(f \in \operatorname{Aut}(Y)\). Let
\(M \subset Y\) be a compact maximally totally real real-analytic torus.
For some \(\bbeta \in \R^2\), suppose that a real-analytic diffeomorphism
\(j_{\R} \colon (\S^1)^2 \to M\) satisfies
\[
f \circ j_{\R} = j_{\R} \circ R_{\bbeta}.
\]
Then \(j_{\R}\) extends to a holomorphic embedding
\(j \colon \cA_r \to Y\) for some \(r > 0\). This extension satisfies
\begin{equation}\label{eq:abstract-complex-conjugacy}
f \circ j = j \circ R_{\bbeta}.
\end{equation}
Set \(W \coloneqq j(\cA_r)\). Then \(W\) is invariant and
\begin{equation}\label{eq:all-integer-iterates}
f^n|_W = j \circ R_{n\bbeta} \circ j^{-1},
\qquad \text{for } n \in \Z.
\end{equation}
In particular, \(W \subset \Fat(f)\).
\end{proposition}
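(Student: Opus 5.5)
The plan is to complexify \(j_{\R}\) directly and to obtain the conjugacy identity by the identity principle. Because \(j_{\R}\) is real-analytic on \((\S^1)^2\), write it in local holomorphic charts of \(Y\) near points of \(M\) as a convergent power series in the angle variables. Replacing the angles \(u_k\) by complex values \(u_k+iv_k\), i.e.\ replacing \(e^{iu_k}\) by \(z_k\), gives a holomorphic map on a neighborhood of \((\S^1)^2\) in \((\C^*)^2\). These local extensions agree on overlaps by the identity principle, since each one is determined by its values on the totally real torus \((\S^1)^2\), which is a uniqueness set for holomorphic functions. Compactness then gives a single \(r_1>0\) and a holomorphic map \(j\colon\cA_{r_1}\to Y\) extending \(j_{\R}\).

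Next I show that \(j\) is an embedding near \((\S^1)^2\). At \(p=(e^{iu_1},e^{iu_2})\), the complex differential \(dj_p\) is the \(\C\)-linear extension of \(d(j_{\R})_p\colon T_p(\S^1)^2\to T_{j(p)}M\). This real map is an isomorphism because \(j_{\R}\) is a diffeomorphism. Since \(T_{j(p)}Y=T_{j(p)}M\oplus iT_{j(p)}M\), the \(\C\)-linear extension is an isomorphism of complex vector spaces. Hence \(j\) is a local biholomorphism on some \(\cA_{r_2}\). Injectivity on a smaller \(\cA_r\) follows from the standard argument: \(j\) is injective on the compact set \((\S^1)^2\) and locally injective nearby. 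If no \(\cA_{1/n}\) worked, there would be pairs \(a_n\ne b_n\) in \(\cA_{1/n}\) with \(j(a_n)=j(b_n)\). These pairs accumulate at some \((a,b)\) in \((\S^1)^2\) with \(j(a)=j(b)\), so \(a=b\), and this contradicts local injectivity at \(a\).

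For the conjugacy, note that \(R_{\bbeta}\) preserves every \(\cA_r\), so both \(f\circ j\) and \(j\circ R_{\bbeta}\) are holomorphic on \(\cA_r\). They agree on \((\S^1)^2\) by hypothesis. The torus \((\S^1)^2\) is maximally totally real in the connected domain \(\cA_r\), so the identity principle gives \eqref{eq:abstract-complex-conjugacy} on all of \(\cA_r\). This yields \(f(W)=j(R_{\bbeta}(\cA_r))=W\), so \(W\) is invariant. Induction in both directions, applying \(f^{-1}\) for negative \(n\), gives \eqref{eq:all-integer-iterates}.

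Finally, \(W\subset\Fat(f)\). Take \(x\in W\) and a compact neighborhood \(K\Subset W\) of \(x\). The family \(\{R_{n\bbeta}\}_{n\in\Z}\) consists of isometries of \(\cA_r\) for the flat metric in the log coordinates. The maps \(j\) and \(j^{-1}\) are uniformly continuous on the compact sets \(\overline{j^{-1}(K')}\), where \(K'\) is a slightly larger compact set. By \eqref{eq:all-integer-iterates}, \(\{f^n|_K\}\) is therefore equicontinuous.

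The main obstacle is the embedding step, that is, ensuring that the complexified differential is invertible. This is exactly where the maximal total reality of \(M\) is used. The remaining steps are routine applications of the identity principle and compactness.
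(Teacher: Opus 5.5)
Your proof is correct and follows essentially the paper's route: a power-series extension of \(j_{\R}\) glued by the identity theorem, the conjugacy \(f\circ j=j\circ R_{\bbeta}\) by analytic continuation from \((\S^1)^2\), and equicontinuity coming from the rotations (you use flat isometries, the paper uses normality and Arzel\`a--Ascoli). The main variation is the embedding step: you use invertibility of the complexified differential plus the injectivity-near-a-compact-set argument, whereas the paper also extends \(j_{\R}^{-1}\) and checks that both compositions are the identity near the tori; separately, in your Fatou step \(j\) should be taken uniformly continuous on a rotation-invariant compact set such as \(\overline{\cA_{r'}}\supset j^{-1}(K)\) with \(r'<r\), not on a slight enlargement of \(j^{-1}(K)\).
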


\begin{proof}
Regard \((\S^1)^2\) as the standard maximally totally real real-analytic
submanifold of \((\C^*)^2\).
We first extend \(j_{\R}\) to a biholomorphism
\(J\colon V\xrightarrow{\sim}V'\) between neighborhoods of \((\S^1)^2\)
and \(M\).
This extension is used without proof in
\cite{Moncet2013}*{Proposition~1.2}; we recall the standard argument.
Near each point, maximal total reality gives holomorphic coordinates in
which the torus is \(\R^2\subset\C^2\).
In such coordinates, the convergent power series of \(j_{\R}\) define
local holomorphic extensions.
By the identity theorem, these extensions are unique and agree near the
torus on overlaps, so they glue.
The same construction applies to \(j_{\R}^{-1}\).
The two compositions are the identity on the tori, hence near them.
Shrinking the neighborhoods gives \(J\).
Compactness gives an \(r>0\) such that
\(\overline{\cA_r}\subset V\). Set
\[
j\coloneqq J|_{\cA_r}\colon\cA_r\xrightarrow{\sim}W,
\qquad W\coloneqq J(\cA_r).
\]

The holomorphic maps \(f\circ j\) and \(j\circ R_{\bbeta}\) agree on
\((\S^1)^2\), so analytic continuation gives
\eqref{eq:abstract-complex-conjugacy} on \(\cA_r\). Since
\(R_{\bbeta}(\cA_r)=\cA_r\), the set \(W\) is invariant, and
iteration yields \eqref{eq:all-integer-iterates}.

The vectors \(\bigl(e^{in\beta_1},e^{in\beta_2}\bigr)\) lie in the compact
torus \((\S^1)^2\). Hence every sequence of integer rotations has a
subsequence converging uniformly on compact subsets of \(\cA_r\), and
conjugation by \(j\) gives the same conclusion for the corresponding
iterates of \(f\) on \(W\). By the Arzel\`a--Ascoli theorem, the family of
integer iterates is equicontinuous on \(W\), so \(W\subset\Fat(f)\).
\end{proof}

\section{The algebraic K3 construction}\label{sec:algebraic}
We now construct the family and its automorphisms described in the introduction.
We verify the K3 property and compute the action on divisor classes and
holomorphic two-forms.

These surfaces and their natural automorphisms were studied by
Bhargava--Ho--Kumar through the penteract construction
\cite{BhargavaHoKumar2016}*{Section~7}. More recently, Hashimoto--Oda
studied the same complete intersections under the name \emph{type II K3
surfaces}, identifying the Picard lattice of a very general member and
showing that its full automorphism group is generated by the six natural
deck involutions \cite{HashimotoOda2026}*{Section~5}.

\subsection{The real family and its involutions}
\label{sec:real-family}
Put
\[
P_{\R} \coloneqq (\P^1_{\R})^4
= \P^1_{x,\R} \times \P^1_{y,\R}
  \times \P^1_{z,\R} \times \P^1_{w,\R},
\qquad
L_{\R} \coloneqq \cO_{P_{\R}}(1,1,1,1).
\]
The real parameter space consists of ordered pairs of sections of
\(L_\R\):
\[
\cV\coloneqq \rH^0(P_{\R},L_{\R})^{\oplus2}\simeq\R^{32}.
\]
For an ordered pair
\(s=(F_{1,s},F_{2,s})\in\cV\), define the zero locus
\[
X_{s,\R}\coloneqq V_{P_{\R}}(F_{1,s},F_{2,s})\subset P_{\R}.
\]

Write
\[
P_{yw,\R}\coloneqq\P^1_{y,\R}\times\P^1_{w,\R},
\qquad
P_{xz,\R}\coloneqq\P^1_{x,\R}\times\P^1_{z,\R}.
\]
There are six coordinate projections onto pairs of factors.
We use the following two, whose pairs of coordinates are complementary:
\begin{equation}\label{eq:two-projections}
\pi_{yw}\colon X_{s,\R}\longrightarrow P_{yw,\R},
\qquad
\pi_{xz}\colon X_{s,\R}\longrightarrow P_{xz,\R}.
\end{equation}
The following proposition provides the parameters used later.
We prove it in \Cref{app:algebraic-details}.
\begin{proposition}\label[proposition]{prop:good-locus}
There is a nonempty real Zariski-open subset \(\cG\subset\cV\) such
that every \(s\in\cG\) satisfies the following conditions.
\begin{enumerate}[label=\textup{(\roman*)}]
\item The surface \(X_{s,\R}\) is smooth.
\item Both projection maps in \eqref{eq:two-projections}
are finite flat morphisms of degree two (double covers).
\end{enumerate}
\end{proposition}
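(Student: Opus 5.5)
The plan is to express each condition as the non-vanishing of a polynomial (or a nonempty Zariski-open condition) on \(\cV\). Then intersect the finitely many resulting open sets and exhibit one explicit parameter, or at least one point in each, to show the intersection is nonempty. Since \(\cV\simeq\R^{32}\) is irreducible, nonempty Zariski-open subsets are dense. A finite intersection of them is therefore again nonempty and Zariski open. It thus suffices to show that each condition separately is Zariski open and holds for at least one \(s\).

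\textbf{Finiteness and flatness.} Write \(F_{k,s}\) as a bilinear form in the \((x,y)\)-variables against the \((z,w)\)-variables. More conveniently, for \(\pi_{yw}\), view \(F_{k,s}\) as a section of \(\cO(1,1)\) on \(\P^1_x\times\P^1_z\) whose coefficients are forms of bidegree \((1,1)\) in \((y,w)\). Over a point \(q\in P_{yw}\), the fiber is the intersection of two \((1,1)\)-curves in \(\P^1_x\times\P^1_z\). This fiber is finite of length \(2\) exactly when the two curves share no common component. The locus of pairs \((s,q)\) where they share a component is closed in \(\cV\times P_{yw}\). Its image in \(\cV\) is closed because \(P_{yw}\) is proper, so its complement is open. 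On that complement \(\pi_{yw}\) is quasi-finite and proper, hence finite. The fibers are complete intersections of constant length \(2\) over the smooth base, so miracle flatness (Cohen--Macaulay source, regular target, equidimensional fibers) gives flatness of degree two. The same argument applies to \(\pi_{xz}\).

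\textbf{Smoothness.} Consider the incidence variety \(\{(s,p): p\in X_{s}\text{ singular}\}\subset\cV\times P\). It is closed, and its image in \(\cV\) is closed by properness of \(P\). Its complement is open, and it is nonempty by Bertini: \(L\) is very ample, so a general complete intersection of two members of \(|L|\) on the fourfold \(P\) is smooth. Since real points are Zariski dense in \(\cV_\C\), a real such \(s\) exists.

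\textbf{Main obstacle: nonemptiness of the flatness condition.} This requires showing that for some \(s\), no fiber of \(\pi_{yw}\) (or \(\pi_{xz}\)) contains a curve. I would use a dimension count. The pairs (common \((1,1)\)-component curve, point \(q\)) impose too many conditions on the coefficients to be attained over a general \(s\). Concretely, for a fixed \(q\), the requirement that the two restricted \((1,1)\)-forms share a \((1,0)\)-, \((0,1)\)-, or \((1,1)\)-factor has codimension at least \(3\) in the \(8\)-dimensional space of pairs of \((1,1)\)-forms. The restriction map \(s\mapsto s|_q\) is a linear surjection. So the bad incidence has dimension at most \(\dim\cV-3+2<\dim\cV\), and its projection is a proper closed subset. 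Alternatively, I would check a single explicit \(s\) by hand.
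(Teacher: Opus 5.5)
Your proof is correct, but it takes a different route from the paper's.

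\textbf{Your route.} You define the good locus directly by the two required properties: smoothness of \(X_s\), and the absence of positive-dimensional fibers of \(\pi_{yw}\) and \(\pi_{xz}\). You then prove nonemptiness by Bertini together with an elementary dimension count.

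\textbf{The paper's route.} The paper defines \(\cG\) as the common smooth locus of all fifteen penteract models \(X_{I,s}\) with \(\abs{I}\in\{3,4\}\). It takes nonemptiness from the generic smoothness remark of Bhargava--Ho--Kumar. It then rules out a curve in a fiber of \(p_{ij}\) by showing that a common \((1,0)\)-, \((0,1)\)-, or \((1,1)\)-factor produces a rank singularity on \(X_{ijk,s}\), \(X_{ijl,s}\), or \(X_{ij5,s}\), respectively.

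\textbf{Relation between the two.} Both arguments use the same trichotomy. Your three bad loci are exactly the loci where one of the three flattenings of the \(2\times2\times2\) tensor \(s(v_i,v_j,\cdot,\cdot,\cdot)\) has rank at most one. Each such locus, viewed as \(2\times4\) or \(4\times2\) matrices of rank \(\le1\), has dimension \(5\). This confirms your codimension-\(3\) count, and with the \(2\)-dimensional base the bad parameters form a proper closed subset.

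\textbf{What each buys.} Your version is self-contained and does not rely on the cited generic smoothness statement. The paper's version gives more: on its \(\cG\), all six coordinate projections are double covers and all kernel maps \(q_{I,J}\) are isomorphisms, which the \((2,2,2)\) remark uses. For the proposition as stated, and for its later uses (which need only (i), (ii), and a nonempty Zariski-open locus), your locus suffices.

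\textbf{Two points to state explicitly.} First, the source is Cohen--Macaulay because it is smooth on your locus (or because it is a complete intersection of the expected dimension), so miracle flatness applies. Second, \(q\) must range over all complex points of \(P_{yw}\). Then finiteness and flatness hold for the complexified morphism and descend to the real one.
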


We fix the set \(\cG\) defined in \eqref{eq:penteract-good-locus} and call
it the \emph{good locus}, as in the introduction.
Its definition also requires the auxiliary models in
\Cref{app:algebraic-details} to be smooth.
The proof there shows that all six coordinate projections are finite flat
double covers on \(\cG\).

The following lemma gives canonical
involutions for these double covers and explains their compatibility with
complexification.

\begin{lemma}[Canonical involution of a double cover]
\label[lemma]{lem:quadratic-cover-involution}
Let \(\pi\colon X\to Y\) be a finite flat morphism of degree two with
\(Y\) locally Noetherian. There is a canonical automorphism \(\tau_\pi\)
of \(X\) over \(Y\) satisfying \(\tau_\pi^2=\id\). Its formation commutes
with arbitrary base change. If \(\pi\) is étale, then \(\tau_\pi\) is its
deck involution.
\end{lemma}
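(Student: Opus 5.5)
The plan is to build \(\tau_\pi\) from the trace, working locally on \(Y\) and then gluing. Since \(\pi\) is finite, \(X=\operatorname{Spec}_Y\cB\) for the quasi-coherent \(\cO_Y\)-algebra \(\cB\coloneqq\pi_*\cO_X\). Flatness, finiteness, and the locally Noetherian hypothesis make \(\cB\) locally free of rank two. An automorphism of \(X\) over \(Y\) is therefore the same as an \(\cO_Y\)-algebra automorphism of \(\cB\), so it suffices to produce a canonical involution \(\sigma\) of \(\cB\) whose formation is compatible with base change.

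Define \(\sigma(b)\coloneqq\operatorname{Tr}(b)-b\), where \(\operatorname{Tr}\colon\cB\to\cO_Y\) is the trace of multiplication. This is defined locally on an affine open \(\operatorname{Spec}R\subset Y\) with \(\cB=R\oplus Re\) free, and canonical because the trace does not depend on the basis. Every \(b\) satisfies its characteristic polynomial \(b^2-\operatorname{Tr}(b)\,b+\operatorname{N}(b)=0\), by Cayley--Hamilton. From this I would check three things. First, \(\sigma\) is \(R\)-linear and fixes \(1\), since \(\operatorname{Tr}(1)=2\). Second, \(\sigma^2=\id\), because \(\operatorname{Tr}(\sigma b)=2\operatorname{Tr}(b)-\operatorname{Tr}(b)=\operatorname{Tr}(b)\). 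Third, \(\sigma\) is multiplicative. For this it is enough to check \(\sigma(e^2)=\sigma(e)^2\) on the generator. Writing \(e^2=te-n\), we have \(\operatorname{Tr}(e)=t\) and \(\operatorname{Tr}(e^2)=t^2-2n\), so \(\sigma(e^2)=t^2-2n-te+n\), while \(\sigma(e)^2=(t-e)^2=t^2-2te+te-n\). These agree, and by linearity \(\sigma\) is an algebra map. This computation is the only real content of the argument. The local definitions glue because the formula is basis-free, which gives \(\tau_\pi\coloneqq\operatorname{Spec}(\sigma)\) satisfying \(\tau_\pi^2=\id\).

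For base change along \(Y'\to Y\), the algebra \(\cB'=\cB\otimes\cO_{Y'}\) is free on the pulled-back basis, and the trace is computed by the same matrix. Hence \(\sigma'=\sigma\otimes\id\), so the formation of \(\tau_\pi\) commutes with arbitrary base change, which need not be Noetherian.

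For the étale case, I would first show that \(\tau_\pi\) is a nontrivial deck transformation on each geometric fiber. By base change it suffices to treat \(Y=\operatorname{Spec}k\) with \(k\) algebraically closed. An étale degree-two algebra over \(k\) is \(k\times k\), where \(\operatorname{Tr}(a,b)=a+b\), so \(\sigma(a,b)=(b,a)\) is the swap, which is the nontrivial deck transformation. Globally, a finite étale degree-two cover is Galois with group \(\Z/2\). Its nontrivial deck transformation agrees with \(\tau_\pi\) on every geometric fiber, and both are \(Y\)-automorphisms. Since automorphisms of an étale cover are determined fiberwise (they are locally constant sections of an étale sheaf), the two coincide. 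The main, though mild, obstacle I expect is phrasing this last rigidity step cleanly. I would argue it via the graph of \(\tau_\pi\) being open and closed in \(X\times_Y X\), which is étale over \(X\).
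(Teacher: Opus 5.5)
Your proposal is correct and follows the paper's proof. Both define \(\tau_\pi\) locally on the rank-two algebra \(\pi_*\cO_X\) by \(u\mapsto\operatorname{Tr}_\pi(u)-u\), glue and base-change using basis-independence of the trace, and identify \(\tau_\pi\) with the swap on an étale geometric fiber \(\overline{k}\times\overline{k}\). The only differences are in detail. You check multiplicativity directly, where the paper gets it from the relation \(\xi\mapsto a-\xi\). You also spell out the rigidity step that the paper leaves implicit, namely that an automorphism of an étale cover is determined by its geometric fibers.
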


\begin{proof}

The algebra \(\pi_*\cO_X\) is locally free of rank two.
Locally, choose a basis \(1,\xi\) with \(\xi^2=a\xi+b\).
The map \(\xi\mapsto a-\xi\) preserves this relation and is an involution.
Let \(\operatorname{Tr}_\pi(u)\) denote the trace of multiplication by \(u\).
The involution is given intrinsically by
\[
u\longmapsto\operatorname{Tr}_\pi(u)-u.
\]
Thus the local maps glue and commute with base change.
On an étale geometric fiber, this map exchanges the two factors of
\(\overline{k}\times\overline{k}\), so it is the deck involution.
\end{proof}
\begin{remark}
On each geometric fiber, the involution \(\tau_\pi\) exchanges the two
points when they are distinct and fixes the unique point otherwise.
\end{remark}

For \(s \in \cG\), let \(A_{s,\R}\) and \(B_{s,\R}\) be the canonical
involutions of \(\pi_{yw}\) and \(\pi_{xz}\), respectively,
given by \Cref{lem:quadratic-cover-involution}.
We call them the \emph{deck involutions}. Define
\[
T_{s,\R}\coloneqq B_{s,\R}\circ A_{s,\R}.
\]

\subsection{The complex K3 surfaces and their entropy}
\label{sec:complex-dynamics}

For \(s\in\cG\), the complexified surface is
\[
X_s=V_P(F_{1,s},F_{2,s})\subset P=(\P^1_{\C})^4,
\qquad L=\cO_P(1,1,1,1).
\]
The complexified projections, still denoted
\(\pi_{yw}\) and \(\pi_{xz}\), are finite flat double covers.
By the base-change compatibility in \Cref{lem:quadratic-cover-involution},
their deck involutions are \(A_s\) and \(B_s\), respectively,
and \(T_s=B_s\circ A_s\).

\begin{proposition}\label[proposition]{prop:k3-entropy}
For every \(s \in \cG\), \(X_s\) is a smooth projective complex K3
surface with Picard number at least \(4\). For every non-vanishing holomorphic
\(2\)-form \(\Omega_s \in \rH^0(X_s,K_{X_s})\), the deck involutions
and their composition satisfy
\[
A_s^*\Omega_s=B_s^*\Omega_s=-\Omega_s,
\qquad
T_s^*\Omega_s = \Omega_s.
\]
The algebraic automorphism \(T_s\) has entropy
\[
h_{\mathrm{top}}(T_s) = \log(7 + 4\sqrt3).
\]
\end{proposition}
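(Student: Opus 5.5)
The argument has three parts: the K3 property, the action on the holomorphic 2-form, and the entropy computed from the action on a rank-4 lattice of divisor classes.

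\emph{K3 property.} For \(s\in\cG\), \(X_s\) is a smooth complete intersection of two divisors of class \(L=\cO_P(1,1,1,1)\) in \(P=(\P^1)^4\). Since \(K_P=\cO_P(-2,-2,-2,-2)\), adjunction gives
\[
K_{X_s}=(K_P\otimes L^{2})|_{X_s}\simeq\cO_{X_s}.
\]
The Koszul resolution
\[
0\to L^{-2}\to (L^{-1})^{\oplus2}\to\cO_P\to\cO_{X_s}\to0
\]
together with Künneth and the vanishing \(\rH^i(\P^1,\cO(-1))=0\) gives \(\rH^1(\cO_{X_s})=0\), because all intermediate cohomology of \(L^{-1}\) and \(L^{-2}\) on \(P\) vanishes except \(\rH^4(L^{-2})\), which feeds \(\rH^2\). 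So \(X_s\) is a K3 surface.

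\emph{Action on \(\Omega_s\) and Picard number.} Let \(h_x,h_y,h_z,h_w\) be the pullbacks of the point classes of the four factors. Their intersection numbers on \(X_s\) are computed in \(P\) via \(X_s\sim L^2\):
\[
h_x^2=0,\qquad h_xh_y=\int_P h_xh_y(h_x+h_y+h_z+h_w)^2=2,
\]
and similarly every product of two distinct classes equals \(2\). The Gram matrix is \(2(J-I)\), which is nondegenerate of signature \((1,3)\), so \(\rho(X_s)\ge4\).

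For the 2-form: \(A_s\) is the deck involution of \(\pi_{yw}\), whose target \(\P^1\times\P^1\) carries no nonzero holomorphic 2-form. Hence \(\Omega_s+A_s^*\Omega_s\), being \(A_s\)-invariant, descends to a holomorphic 2-form on the smooth target away from the branch locus and extends across it, so it is zero. This gives \(A_s^*\Omega_s=-\Omega_s\), and the same argument for \(\pi_{xz}\) gives \(B_s^*\Omega_s=-\Omega_s\). Composing, \(T_s^*\Omega_s=\Omega_s\).

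\emph{Action on divisor classes.} \(A_s\) fixes \(h_y\) and \(h_w\). For \(h_x\), note that \(h_x+A_s^*h_x=\pi_{yw}^*\pi_{yw*}h_x\). The pushforward \(\pi_{yw*}h_x\) is a divisor class \(a h_y'+b h_w'\) on \(P_{yw}\), with coefficients read off by intersecting: \(h_x\cdot h_y=2\) gives \(b=2\), and likewise \(a=2\). Hence
\[
A_s^*h_x=-h_x+2h_y+2h_w,
\]
and symmetrically for \(h_z\). The formulas for \(B_s\) are obtained by exchanging the roles of \((x,z)\) and \((y,w)\). These give explicit \(4\times4\) integer matrices, and their product is the action of \(T_s^*\) on \(N=\langle h_x,h_y,h_z,h_w\rangle\). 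Its characteristic polynomial should factor into quadratics, with spectral radius \(7+4\sqrt3\), a root of \(\lambda^2-14\lambda+1\).

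\emph{Entropy.} By Gromov–Yomdin, \(h_{\mathrm{top}}(T_s)=\log\lambda_1(T_s)\), the spectral radius of \(T_s^*\) on \(\rH^{1,1}\). The main obstacle is that \(N\) need not be all of \(\mathrm{NS}(X_s)\). Since \(N\) is \(T_s^*\)-invariant and nondegenerate, \(\rH^2\) splits as \(N\oplus N^\perp\), and \(N^\perp\) has negative definite algebraic part plus the transcendental part. To control \(N^\perp\), use that \(\lambda_1\) is the unique eigenvalue of modulus greater than \(1\) and that its eigenvector is nef, hence lies in the positive cone. A Salem-type argument for K3 surfaces then shows that all eigenvalues of \(T_s^*\) outside the unit circle occur in the invariant subspace containing an ample class. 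Since \(N\) contains the ample class \(\sum h\) and is invariant, the leading eigenvalue found on \(N\) is \(\lambda_1\).
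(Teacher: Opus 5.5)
Your outline is sound, and it agrees with the paper on the intersection numbers, the push--pull formulas for $A_s^*$ and $B_s^*$, and the descent argument for $\Omega_s$. Working with $\Omega_s+A_s^*\Omega_s$ is equivalent to the paper's case split $A_s^*\Omega_s=\pm\Omega_s$. The extension of the descended form across the branch curve does need a justification, because that curve is a divisor and Hartogs does not apply. The paper supplies it by writing the cover locally as $(u,v)\mapsto(u,v^2)$ and noting that an invariant coefficient is odd in $v$.

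Two steps differ from the paper.

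For $\rH^1(X_s,\cO_{X_s})=0$, you use the Koszul resolution and K\"unneth instead of the Lefschetz hyperplane theorem. Both routes work, and yours also gives $h^0(\cO_{X_s})=h^2(\cO_{X_s})=1$ directly.

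For the entropy, your route is heavier than necessary, and its key step is only asserted. You appeal to the uniqueness of the eigenvalue outside the unit circle and to an unspecified ``Salem-type argument.'' Also, the nef eigenvector $v_+$ satisfies $v_+^2=0$, so it lies on the boundary of the positive cone, not inside it.

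The paper's argument is one line, and you nearly wrote it. Since $N$ has signature $(1,3)$ and contains an ample class, the Hodge index theorem makes $N^\perp\cap\rH^{1,1}(X_s,\R)$ negative definite. This subspace is $T_s^*$-invariant, and an isometry of a definite form has all its eigenvalues on the unit circle. Hence the spectral radius on $\rH^{1,1}$ equals the spectral radius on $N$. Note that the relevant piece is this Hodge-type intersection, not the transcendental lattice, which is indefinite.

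If you prefer your own route, the missing line is this. The nonzero nef class $v_+$ and the ample class $\sum h$ satisfy $v_+\cdot\sum h>0$, so $v_+\notin N^\perp$. Since the splitting $N\oplus N^\perp$ is $T_s^*$-invariant, the $N$-component of $v_+$ is a nonzero $\lambda_1$-eigenvector. Perron--Frobenius on the nef cone then suffices, and no Salem-number input is needed.

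Finally, you should compute the characteristic polynomial rather than predict it. The matrix $M_AM_B$ commutes with the swap $x\leftrightarrow z$, $y\leftrightarrow w$. It acts as $-\id$ on the anti-invariant part, and on the invariant part by a matrix of trace $14$ and determinant $1$. This gives $(t+1)^2(t^2-14t+1)$.
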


\begin{proof}
We first verify the K3 property.
We then compute the intersection form on a \(T_s^*\)-invariant rank-four
sublattice of the N\'eron--Severi group.
Finally, we determine the involution actions and deduce the assertions for
\(T_s\).

\noindent\textbf{Step 1: the K3 property.}
Since \(s\in\cG\), the real surface \(X_{s,\R}\) is smooth, and hence so
is its complexification \(X_s\). Moreover, \(X_s\) is projective because it
is closed in \(P\).

Adjunction gives
\begin{equation}\label{eq:adjunction}
K_{X_s} = (K_P + 2L)|_{X_s} \simeq \cO_{X_s}.
\end{equation}
Moreover, \(X_s\) is a smooth complete intersection of two ample divisors
in \(P=(\P^1)^4\).  The Lefschetz hyperplane theorem therefore gives
\[
\rH^1(X_s,\C) \simeq \rH^1(P,\C)=0.
\]
Hence \(\rH^1(X_s,\cO_{X_s})=0\) by Hodge decomposition. Together with
\eqref{eq:adjunction}, this proves that \(X_s\) is a K3 surface.

\noindent\textbf{Step 2: the intersection form.}
We determine the part of the N\'eron--Severi lattice of \(X_s\) coming
from the ambient product. Pull back the class of \(\cO_{\P^1}(1)\) from each of
the four factors of \(P\). Let \(H_x,H_y,H_z,H_w\) denote the restrictions
of these classes to \(X_s\).
Write \(V_s\coloneqq\Span_{\R}\{H_x,H_y,H_z,H_w\}\) for their real span.
Intersection in \(P\) gives
\[
H_i^2 = 0,
\qquad H_iH_j = 2 \quad \text{for } i \ne j.
\]
The Gram matrix has eigenvalues \(6,-2,-2,-2\), so the four classes are
independent in \(\NS(X_s)\) and span a rank-four lattice of signature
\((1,3)\). In particular, \(\rho(X_s)\geq4\) for every \(s\in\cG\).

\noindent\textbf{Step 3: the involution actions.}
To compute the action on divisor classes, we first recall two push--pull
identities for a finite double cover \(\pi\colon Z\to Y\) of smooth
complex surfaces. Let \(\tau\) be its deck involution.
For divisor classes \(E\) on \(Y\) and \(D\) on \(Z\), one has
\begin{equation}\label{eq:double-cover-push-pull}
\pi_*\pi^*E=2E,
\qquad
\pi^*\pi_*D=D+\tau^*D.
\end{equation}
The first identity is the degree formula for a finite locally free morphism
\cite{StacksProject}*{\href{https://stacks.math.columbia.edu/tag/02RH}
{Tag~02RH}}. For the second, \(\pi\) realizes \(Y\) as the quotient
\(Z/G\), where \(G=\{\id,\tau\}\).
The push--pull formula for a finite group quotient gives
\(\pi^*\pi_*D=\sum_{g\in G}g_*D\)
\cite{Fulton1998}*{Example~1.7.6}.
Since \(\tau^{-1}=\tau\), we have \(\tau_*D=\tau^*D\).
This proves the second identity.

Apply these identities to \(\pi_{yw}\).
Let \(h_y,h_w\) be the pullbacks of the class of \(\cO_{\P^1}(1)\)
from the two factors of \(P_{yw}\coloneqq\P^1_y\times\P^1_w\).
Then \(H_y=\pi_{yw}^*h_y\) and \(H_w=\pi_{yw}^*h_w\), so
\(A_s^*\) fixes \(H_y,H_w\).
To compute its action on \(H_x\), write
\((\pi_{yw})_*H_x=a h_y+b h_w\).
Since \(h_y^2=h_w^2=0\) and \(h_yh_w=1\), the projection formula and
the intersection numbers above give
\[
\begin{aligned}
b&=((\pi_{yw})_*H_x)\mathbin{\cdot}h_y
    =H_x\mathbin{\cdot}\pi_{yw}^*h_y
    =H_x\mathbin{\cdot}H_y=2,\\
a&=((\pi_{yw})_*H_x)\mathbin{\cdot}h_w
    =H_x\mathbin{\cdot}\pi_{yw}^*h_w
    =H_x\mathbin{\cdot}H_w=2.
\end{aligned}
\]
The same computation applies to \(H_z\). Thus
\[
(\pi_{yw})_*H_x = 2h_y + 2h_w,
\qquad
(\pi_{yw})_*H_z = 2h_y + 2h_w.
\]
The second identity in \eqref{eq:double-cover-push-pull} gives
\[
H_x+A_s^*H_x=\pi_{yw}^*(\pi_{yw})_*H_x=2H_y+2H_w,
\qquad
H_z+A_s^*H_z=\pi_{yw}^*(\pi_{yw})_*H_z=2H_y+2H_w.
\]
Consequently,
\begin{equation}\label{eq:A-action}
\begin{aligned}
A_s^*H_x &= -H_x + 2H_y + 2H_w,&
A_s^*H_z &= -H_z + 2H_y + 2H_w,\\
A_s^*H_y &= H_y,& A_s^*H_w &= H_w.
\end{aligned}
\end{equation}
Applying the same argument to \(\pi_{xz}\) gives
\begin{equation}\label{eq:B-action}
\begin{aligned}
B_s^*H_y &= -H_y + 2H_x + 2H_z,&
B_s^*H_w &= -H_w + 2H_x + 2H_z,\\
B_s^*H_x &= H_x,& B_s^*H_z &= H_z.
\end{aligned}
\end{equation}
These formulas show that \(A_s^*\) and \(B_s^*\) preserve \(V_s\).
We use the ordered basis \((H_x,H_y,H_z,H_w)\) and let matrices act
on coefficient column vectors.
The matrices of \(A_s^*|_{V_s}\) and \(B_s^*|_{V_s}\) are
\[
M_A=\begin{pmatrix}
-1&0&0&0\\
2&1&2&0\\
0&0&-1&0\\
2&0&2&1
\end{pmatrix},
\qquad
M_B=\begin{pmatrix}
1&2&0&2\\
0&-1&0&0\\
0&2&1&2\\
0&0&0&-1
\end{pmatrix}.
\]
We also determine the action on the holomorphic two-form.
Since \(A_s\) has order \(2\), we have either
\(A_s^*\Omega_s = \Omega_s\) or \(A_s^*\Omega_s = -\Omega_s\). We show
that \(\Omega_s\) is not invariant under \(A_s\). If a holomorphic
two-form were invariant under \(A_s\), it would descend to
\(\P^1_y \times \P^1_w\).
To check descent across the branch curve, choose local coordinates in
which the cover is \((u,v)\mapsto(u,t=v^2)\).
Invariance of \(f(u,v)\,du\wedge dv\) under \(v\mapsto-v\) says that
\(f\) is odd in \(v\). Thus \(f(u,v)=v g(u,v^2)\) for a holomorphic
function \(g\), and the form is the pullback of
\(\tfrac12 g(u,t)\,du\wedge dt\) on the base \(\P^1 \times \P^1\).
Since
\(\rH^0(\P^1 \times \P^1,K_{\P^1 \times \P^1}) = 0\), \(\Omega_s\)
cannot be \(A_s\)-invariant. Thus \(A_s^*\Omega_s = -\Omega_s\).
The same argument gives \(B_s^*\Omega_s = -\Omega_s\).

\noindent\textbf{Step 4: the action of \(T_s\).}
Since \(T_s^*=A_s^*B_s^*\), its matrix on \(V_s\) in this convention is
\[
M_T=M_AM_B=
\begin{pmatrix}
-1&-2&0&-2\\
2&7&2&8\\
0&-2&-1&-2\\
2&8&2&7
\end{pmatrix}.
\]
Its characteristic polynomial is \((t+1)^2(t^2-14t+1)\).
Thus its spectral radius is \(7+4\sqrt3\).
The orthogonal complement of \(V_s\) in \(\rH^{1,1}(X_s,\R)\) is
negative definite, by the Hodge index theorem, and \(T_s^*\)-invariant.
The action there is an isometry, so its eigenvalues have modulus one.
Thus the first dynamical degree is exactly
\(\lambda_1(T_s)=7+4\sqrt3\), and the Gromov--Yomdin theorem gives
\[
h_{\mathrm{top}}(T_s) = \log\lambda_1(T_s) = \log(7 + 4\sqrt3).
\]

Both involutions \(A_s\) and \(B_s\) pull back \(\Omega_s\) to \(-\Omega_s\), so their composition satisfies
\[
T_s^*\Omega_s = \Omega_s.
\]
\end{proof}

\begin{remark}
For a very general parameter \(s\in\cV\) (that is, outside a countable union
of proper real algebraic subsets), the surface \(X_s\) has Picard number
\(4\) \cite{BhargavaHoKumar2016}*{Theorem~7.1}.
In this case, \(\NS(X_s)\otimes_{\Z}\R=V_s=\Span_{\R}\{H_x,H_y,H_z,H_w\}\).

Bhargava--Ho--Kumar \cite{BhargavaHoKumar2016}*{Section~7} and
Hashimoto--Oda \cite{HashimotoOda2026}*{Section~5} compute the intersection
form and deck-involution actions for very general \(X_s\).
The argument above establishes these formulas for every \(s\in\cG\),
including members of higher Picard number.
\end{remark}

\begin{remark}
\label[remark]{rem:full-cohomology-action}
For every \(s\in\cG\), the characteristic polynomial on the full second
cohomology is
\[
\det\bigl(t\id-T_s^*|_{\rH^2(X_s,\R)}\bigr)
=(t-1)^{18}(t+1)^2(t^2-14t+1).
\]
Indeed, the intersection form is nondegenerate on \(V_s\), so
\(\rH^2(X_s,\R)=V_s\oplus V_s^\perp\). Here \(V_s^\perp\) is the orthogonal
complement of \(V_s\) in \(\rH^2(X_s,\R)\).
Since \(X_s\) is a K3 surface, \(\dim_{\R}V_s^\perp=22-4=18\).
The push--pull identities \eqref{eq:double-cover-push-pull} also hold in
cohomology.
For \(u\in V_s^\perp\), the projection formula shows that
\((\pi_{yw})_*u\) pairs trivially with \(h_y\) and \(h_w\), so it vanishes.
Thus \(A_s^*u=-u\). Similarly, \(B_s^*u=-u\).
Hence \(T_s^*\) is the identity on \(V_s^\perp\).
\end{remark}

\section{A smoothing with prescribed rotation}\label{sec:smoothing}

This section produces good parameters whose real dynamics is a prescribed
Diophantine rotation. We start from reducible surfaces in the family
parametrized by \(\cV\). Their parameters lie outside the good locus
\(\cG\), but their real loci consist of two tori contained in the smooth
locus. On these tori, the real dynamics is an explicit rotation whose
vector depends on a parameter \(\bt\in\R^2\). We perturb the defining
sections in one fixed direction and adjust \(\bt\). This gives the following
theorem.

\begin{theorem}[Prescribed-rotation smoothing]
\label[theorem]{thm:prescribed-rotation-smoothing}
There is an open disk \(Q_0\subset(-\pi,\pi)^2\) with the following
property. For every \(\gamma>0\) and every
\(\balpha\in Q_0\cap\operatorname{DC}(\gamma,3)\), there is a parameter
\(s\in\cG\) such that the smooth real projective K3 surface
\(X_{s,\R}\) has real locus with two connected components
\[
X_{s,\R}(\R)=M_{0,s}\sqcup M_{1,s}.
\]
The components are real-analytic tori, and there are real-analytic
diffeomorphisms \(j_{\ell,s}\colon\T^2\to M_{\ell,s}\) satisfying
\[
T_{s,\R}\circ j_{\ell,s}
=j_{\ell,s}\circ\Rot_{(-1)^\ell\balpha},
\qquad \text{for } \ell=0,1.
\]
\end{theorem}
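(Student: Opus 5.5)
The plan follows the sketch in the introduction, in four stages: build the singular rotation models, smooth them uniformly in \(\bt\), cancel Herman's correction term by a degree argument, and finally transfer the rotation to the second torus.

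\textbf{Stage 1: the singular models.} In the Cayley coordinate, the graph of \(\Rot_{-\nu}\) in the \(x\)-factor is cut out by \(y-x+t_1(1+xy)=0\), since \(\tan\) of half the angle difference equals \(t_1\). So I take \(c(\bt)\) to be the pair
\[
(y-x)\bigl(w-z+t_2(1+zw)\bigr),\qquad \bigl(y-x+t_1(1+xy)\bigr)(w-z).
\]
Its zero set contains the two graphs \(\{y=x,w=z\}\) and \(\{y=\Rot_{-\nu_1}x,\ w=\Rot_{-\nu_2}z\}\). It also contains complex or degenerate components, for instance \(\{y=x\}\cap\{w=z\}\)-type planes and the mixed products. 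I will check that, for \(t_1t_2\neq0\), the real points are exactly the two graphs. I will also check that the Jacobian of the pair has rank two along them, which uses \(t_i\neq0\); this gives smoothness. The mixed components meet the graphs only in nonreal points.

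The two real sheet exchanges act on the graphs explicitly. The composition \(B\circ A\) sends a point of the first graph with coordinates \((x,z)\) to the point of the first graph with coordinates \((x,z)+\nu(\bt)\). On the second graph it acts by \(-\nu(\bt)\).

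\textbf{Stage 2: uniform smoothing.} I choose \(v_0\in\cV\) and a small disk \(D_0\) so that \(s(\bt,\lambda)=c(\bt)+\lambda v_0\) lies in \(\cG\) for all \(\bt\in\overline{D_0}\) and all \(0<\abs\lambda<\delta_0\). \(\cG\) is Zariski open and its complement \(\Sigma\) is a proper algebraic set. The restriction of \(\Sigma\) to the affine family \((\bt,\lambda)\) is given by polynomials whose expansion in \(\lambda\) has a leading coefficient in \(\bt\). The obstacle is to pick \(v_0\) so that this leading coefficient is nonzero on \(\overline{D_0}\). Any nonzero polynomial in \(\bt\) is nonvanishing on some small disk, so it suffices to show the family is not entirely inside \(\Sigma\). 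That requires a single explicit good point \(c(\bt^*)+\lambda^*v_0\), or a dimension count using the appendix description of \(\cG\). I would choose \(v_0\) generic, as in the numerical example in the footnote. Then I would shrink \(D_0\) to avoid the zero set of the discriminant-type leading coefficients in \(\lambda\), which are finitely many polynomials. Set \(Q_0=\nu(D_0')\) for a slightly smaller disk \(D_0'\).

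\textbf{Stage 3: persistence and KAM.} By the real-analytic implicit function theorem along the compact tori, the real locus near \(\lambda=0\) is the union of graphs of \(g_0(\bt,\lambda)\) and \(g_1(\bt,\lambda)\). This uses the transversality from Stage 1. I must also show that no other real points appear for small \(\lambda\); this follows from compactness, because the real locus at \(\lambda=0\) is exactly the two graphs. The induced map \(f_{\bt,\lambda}=g_1^{-1}\circ g_0\) depends continuously in \(C^q\) on the parameters, and \(f_{\bt,0}=\Rot_{\nu(\bt)}\).

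Given \(\balpha\in Q_0\cap\operatorname{DC}(\gamma,3)\), I apply \Cref{thm:herman} and take a small disk \(D\) around \(\bt_0=\nu^{-1}(\balpha)\) with \(\nu(\overline D)\) inside the \(r_{\balpha}\)-ball. Then \(\Theta(\bt,0)=\nu(\bt)-\balpha\) has degree \(1\), since \(\nu\) is orientation preserving, and is nonzero on \(\partial D\). For small \(\lambda\), the homotopy \(\Theta(\cdot,\mu)\), \(\mu\in[0,\lambda]\), stays in \(\cU_{\balpha}\) and avoids zero on \(\partial D\) by uniform continuity. This gives \(\bt_\lambda\) with \(\Theta(\bt_\lambda,\lambda)=0\). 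At \(s=s(\bt_\lambda,\lambda)\in\cG\), the deck involutions restrict on the real locus to the sheet exchanges. Hence \(T_{s,\R}|_{M_0}\) is conjugate to \(\Rot_{\balpha}\) via \(\psi\) composed with the graph parametrization, which gives \(j_{0,s}\).

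\textbf{Stage 4: the second torus.} Since \(A\) exchanges the two graphs and \(ATA=A B A A=AB=T^{-1}\), I set \(j_{1,s}=A\circ j_{0,s}\). Then
\[
T\circ j_{1,s}=A\,T^{-1} j_{0,s}=j_{1,s}\circ\Rot_{-\balpha},
\]
as required.

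The main obstacle is the uniform goodness of Stage 2.
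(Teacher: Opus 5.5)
Your proposal is correct and follows essentially the same route as the paper: the same singular models \(c(\bt)\), a pencil \(c(\bt)+\lambda v_0\) made uniformly good via the lowest-order \(\lambda\)-coefficient of a polynomial cutting out the complement of \(\cG\), implicit-function persistence of the two graphs, a degree argument cancelling Herman's correction term, and \(j_{1,s}=A\circ j_{0,s}\). The step you flag as the main obstacle is settled in the paper by making your ``single good point'' remark explicit: take \(v_0\coloneqq s^*-c(\bt^*)\) for any \(s^*\in\cG\), so that \(\Delta=\sum p_i^2\) is nonzero at \((\bt^*,1)\); also, since \(\nu(D_0')\) is not a round disk in general, take \(Q_0\) to be a round disk inside \(\nu(D_0)\).
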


In \Cref{sec:singular-rotation-models}, we construct the singular
rotation models and describe the dynamics on their real loci.
In \Cref{sec:smoothing-kam}, we choose a smoothing direction whose small
perturbations are good uniformly over a disk of parameters \(\bt\).
The implicit function theorem deforms the tori analytically
(\Cref{lem:graph-persistence}).
A topological degree argument then gives a parameter at which Herman's
correction term vanishes (\Cref{lem:phase-selection}).

\subsection{Dynamics on the singular rotation models}
\label{sec:singular-rotation-models}

We first express rotations in algebraic coordinates.
As in \cite{Moncet2013}*{Section~1.1}, we identify the real projective
line \(\P^1(\R)\) with the one-dimensional torus \(\T\) using Cayley
coordinates:
\[
\kappa\colon\T\xrightarrow{\sim}\P^1(\R),
\qquad
\theta\longmapsto[\cos(\theta/2):\sin(\theta/2)].
\]
In the affine chart, \(\kappa(\theta)=[1:\tan(\theta/2)]\) for
\(\theta\not\equiv\pi\pmod{2\pi}\).
Grouping the \((x,z)\)-coordinates and the \((y,w)\)-coordinates gives
\[
P_\R(\R)\simeq\T^2_{x,z}\times\T^2_{y,w}.
\]
Here \(\T^2_{x,z}\coloneqq\T_x\times\T_z\) and
\(\T^2_{y,w}\coloneqq\T_y\times\T_w\) are copies of \(\T^2\).
Under this identification, \(\pi_{xz}\) and \(\pi_{yw}\) are the
projections onto \(\T^2_{x,z}\) and \(\T^2_{y,w}\), respectively.

For \(t\in\R\), let \(r_t\colon\P^1\to\P^1\) be the projective
automorphism defined over \(\R\) by
\[
r_t([y_0:y_1])=[y_0-ty_1:y_1+ty_0].
\]
Its affine expression is \(r_t(y)=(y+t)/(1-ty)\).
The matrix \(\left(\begin{smallmatrix}1&-t\\t&1\end{smallmatrix}\right)\)
is a positive multiple of a rotation matrix with angle \(\arctan t\).
The point \(\kappa(\theta)\) is the line spanned by
\((\cos(\theta/2),\sin(\theta/2))\), whose angle is \(\theta/2\).
Thus
\begin{equation}\label{eq:cayley-rotation}
r_t\bigl(\kappa(\theta)\bigr)
=\kappa\bigl(\theta+2\arctan t\bigr).
\end{equation}
In particular, \(r_{-t}=r_t^{-1}\).
We use homogeneous coordinates \(x=[x_0:x_1]\), and similarly for
\(y,z,w\).
Define
\[
\begin{aligned}
P_t(x,y)&\coloneqq x_1y_0-x_0y_1-t(x_1y_1+x_0y_0),\\
Q_t(z,w)&\coloneqq z_1w_0-z_0w_1-t(z_1w_1+z_0w_0).
\end{aligned}
\]
These bilinear polynomials vanish precisely when
\(y=r_{-t}(x)\) and \(w=r_{-t}(z)\), respectively.
We use \(r_{-t}\) rather than \(r_t\) so that the dynamics on the graph of
the identity acts by \(r_t\) rather than by its inverse
(\Cref{prop:singular-toral-family}).

We now multiply these polynomials to obtain a reducible surface.
For \(\bt=(t_1,t_2)\in\R^2\), define the corresponding parameter and
rotation vector by
\begin{equation}\label{eq:singular-coefficient-family}
c(\bt)\coloneqq\bigl(P_0Q_{t_2},P_{t_1}Q_0\bigr)\in\cV,
\qquad
\nu(\bt)\coloneqq(2\arctan t_1,2\arctan t_2).
\end{equation}
The map \(c\) is affine in \(\bt\).
The map \(\nu\colon\R^2\to(-\pi,\pi)^2\) is a real-analytic
diffeomorphism. Its Jacobian determinant is
\(4/((1+t_1^2)(1+t_2^2))>0\).
For a map \(g\colon\T^2_{x,z}\to\T^2_{y,w}\), write
\[
\Gamma(g)\coloneqq\{((x,z),g(x,z)):(x,z)\in\T^2_{x,z}\}
\subset\T^2_{x,z}\times\T^2_{y,w}.
\]
We shall see in \Cref{prop:singular-toral-family} that the real locus of
\(X_{c(\bt)}\) is the disjoint union of \(\Gamma(\id)\) and
\(\Gamma(\Rot_{-\nu(\bt)})\).

Before proving this, we compute the dynamics on a real locus consisting of
two graphs.
The computation applies both to the singular models below and to their
smoothings in \Cref{sec:smoothing-kam}.
Let \(s\in\cV\), and suppose that
\[
X_{s,\R}(\R)=\Gamma(g_0)\sqcup\Gamma(g_1).
\]
Here \(g_0,g_1\colon\T^2_{x,z}\to\T^2_{y,w}\) are real-analytic
diffeomorphisms.
Then \(\pi_{yw}\) and \(\pi_{xz}\) restrict to two-sheeted coverings of
\(\T^2_{y,w}\) and \(\T^2_{x,z}\). Each fiber consists of one point on each
graph. The \emph{sheet exchange} of each covering swaps these two points.
The \(yw\)-exchange fixes \((y,w)\), and the \(xz\)-exchange fixes
\((x,z)\).
Their composition is given by
\[
\begin{aligned}
((x,z),g_0(x,z))
&\longmapsto\bigl((g_1^{-1}\circ g_0)(x,z),g_0(x,z)\bigr)\\
&\longmapsto\bigl((g_1^{-1}\circ g_0)(x,z),
(g_0\circ g_1^{-1}\circ g_0)(x,z)\bigr).
\end{aligned}
\]
Thus the \(yw\)-exchange followed by the \(xz\)-exchange acts on the first
graph, in its \((x,z)\)-coordinates, as
\begin{equation}\label{eq:graph-dynamics}
f=g_1^{-1}\circ g_0.
\end{equation}
This is the order of composition defining \(T_s=B_s\circ A_s\).

\begin{proposition}[The singular rotation model]
\label[proposition]{prop:singular-toral-family}
\label[proposition]{lem:central-fiber-geometry}
\label[proposition]{lem:central-etale-double-covers}
\label[proposition]{thm:reducible-graph-model}
Let \(\bt=(t_1,t_2)\in\R^2\) satisfy \(t_1t_2\ne0\).
The complex surface \(X_{c(\bt)}\) is singular.
Its real locus consists of two connected components
\begin{equation}\label{eq:central-real-graphs}
X_{c(\bt),\R}(\R)
=M_0\sqcup M_1
=\Gamma(\id)\sqcup\Gamma(\Rot_{-\nu(\bt)})
\subset X_{c(\bt)}^{\mathrm{sm}}.
\end{equation}
The graph parametrizations depend real-analytically on \(\bt\).
Both projections \(\pi_{xz}\) and \(\pi_{yw}\) restrict to real-algebraic
isomorphisms on each component.
They are therefore split two-sheeted real-analytic coverings of their real
bases. Their sheet exchanges are real-analytic involutions.
The \(yw\)-exchange followed by the \(xz\)-exchange acts in the
\((x,z)\)-coordinates of \(M_0\) and \(M_1\), respectively, as
\[
\Rot_{\nu(\bt)}
\quad\text{and}\quad
\Rot_{-\nu(\bt)}.
\]
\end{proposition}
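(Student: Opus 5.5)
The plan is to decompose \(X_{c(\bt)}\) into its four obvious pieces, find the real points of each piece, and then read off the dynamics from the graph formula \eqref{eq:graph-dynamics}. Since \(c(\bt)=(P_0Q_{t_2},P_{t_1}Q_0)\), set-theoretically
\[
X_{c(\bt)}=V(P_0,P_{t_1})\cup V(P_0,Q_0)\cup V(Q_{t_2},P_{t_1})\cup V(Q_{t_2},Q_0).
\]
By the discussion preceding the statement, \(P_t(x,y)=0\) if and only if \(y=r_{-t}(x)\), and similarly for \(Q_t\).
\begin{itemize}
\item \(V(P_0,Q_0)\) is the graph \(\{y=x,\ w=z\}\).
\item \(V(P_{t_1},Q_{t_2})\) is \(\{y=r_{-t_1}(x),\ w=r_{-t_2}(z)\}\).
\item \(V(P_0,P_{t_1})\) requires \(x=y=r_{-t_1}(x)\), so \(x\) is a fixed point of \(r_{-t_1}\).
\end{itemize}
For \(t_1\ne0\), the map \(r_{-t_1}\) is a nontrivial rotation of \(\P^1(\R)\) by \eqref{eq:cayley-rotation}. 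Its only fixed points are \([1:\pm i]\). Hence \(V(P_0,P_{t_1})\) is the union of the two planes \(\{x=y=[1:\pm i]\}\times\P^1_z\times\P^1_w\), which have no real points. The same holds for \(V(Q_{t_2},Q_0)\) when \(t_2\ne0\). This identifies the real locus as \(\Gamma(\id)\cup\Gamma(\Rot_{-\nu(\bt)})\), using \eqref{eq:cayley-rotation} in the form \(r_{-t}(\kappa(\theta))=\kappa(\theta-2\arctan t)\). The two graphs are disjoint because \(\Rot_{-\nu(\bt)}\) has no fixed point on either factor (\(2\arctan t_k\not\equiv0\)). The parametrizations depend real-analytically on \(\bt\) through \(\nu\).

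\textbf{Singularity.} The surface is singular because it is reducible with components meeting along curves. For example, \(V(P_0,Q_0)\) meets \(V(P_0,P_{t_1})\) along \(\{x=y=[1:i],\ z=w\}\). Any point on two irreducible components of a reduced surface is singular. Even without reducedness, the ideal there is not that of a smooth surface, since the tangent space computed from the two equations has dimension at least \(3\).

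\textbf{Smoothness along the real locus.} This is the step needing care. Take a real point \(p\in\Gamma(\id)\). By the computation above, \(p\) lies in no other piece. Concretely, \(P_{t_1}(p)\ne0\) because \(y=x\ne r_{-t_1}(x)\), and \(Q_{t_2}(p)\ne0\) for the same reason. So near \(p\) these two are units, and the ideal \((P_0Q_{t_2},P_{t_1}Q_0)\) equals \((P_0,Q_0)\) locally. The polynomials \(P_0\) and \(Q_0\) involve disjoint variable pairs \((x,y)\) and \((z,w)\), and each has nonzero differential since it is bilinear and defines a graph. Hence their differentials are independent, and \(p\in X^{\mathrm{sm}}_{c(\bt)}\). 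The argument for \(\Gamma(\Rot_{-\nu(\bt)})\) is symmetric, with \(P_0,Q_0\) as the units and \(P_{t_1},Q_{t_2}\) as the local equations.

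\textbf{Projections and dynamics.} On each component, \(\pi_{xz}\) is an isomorphism because each component is a graph over \((x,z)\). Likewise \(\pi_{yw}\) is an isomorphism because the graph maps \(\id\) and \(r_{-t_1}\times r_{-t_2}\) are algebraic automorphisms. So both projections restrict to split two-sheeted coverings with real-analytic sheet exchanges. Applying \eqref{eq:graph-dynamics} with \(g_0=\id\) and \(g_1=\Rot_{-\nu(\bt)}\) gives \(f=\Rot_{\nu(\bt)}\) on \(M_0\). Repeating the same composition computation with the roles of the graphs swapped gives \(g_0^{-1}\circ g_1=\Rot_{-\nu(\bt)}\) on \(M_1\).
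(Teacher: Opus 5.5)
Your proof is correct and follows essentially the paper's route: the same four-piece decomposition, the same observation that the two extra pieces lie over the non-real fixed points \([1:\pm i]\), the same local-equation argument for smoothness along the two graphs, and the same application of the graph-dynamics formula to \(M_0\) and \(M_1\). The differences are cosmetic. You witness singularity at a point of \(V(P_0,Q_0)\cap V(P_0,P_{t_1})\), where \(d(P_{t_1}Q_0)\) vanishes, instead of at the paper's point \((q,q,r,r)\), where both differentials vanish. You also phrase smoothness through the local ideal \((P_0,Q_0)\) rather than the explicit diagonal/antidiagonal \((y,w)\)-Jacobian; the paper records that Jacobian because the graph-persistence lemma reuses it.
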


\begin{proof}
We first determine the real locus.
The defining equations \(P_0Q_{t_2}\) and \(P_{t_1}Q_0\) of
\(X_{c(\bt)}\) factor.
Pairing one factor from each gives the set-theoretic decomposition in
\(P=(\P^1)^4\):
\begin{equation}\label{eq:central-decomposition}
X_{c(\bt)}
=V(P_0,Q_0)\cup V(P_{t_1},Q_{t_2})
 \cup V(P_0,P_{t_1})\cup V(Q_0,Q_{t_2}).
\end{equation}
For \(t\ne0\), the fixed points of \(r_t\) are
\(\Sigma\coloneqq\{[1:i],[1:-i]\}\).
Thus the last two sets lie over \(x=y\in\Sigma\) and
\(z=w\in\Sigma\), respectively. They have no real points.
The first two sets give the graphs in \eqref{eq:central-real-graphs}.
These graphs are disjoint because \(r_{t_1}\) has no real fixed point.
Their formulas give real-analytic dependence on \(\bt\).
All four sets in \eqref{eq:central-decomposition} have complex dimension
two. At a point \((q,q,r,r)\) with \(q,r\in\Sigma\), all four factors
vanish. The differentials of both defining equations vanish there, so the
surface is singular.

To deform \(M_0\) and \(M_1\) analytically, we will use the Jacobian in the
\((y,w)\)-variables. We verify that it is invertible.
Along \(M_0\), differentiation in affine charts gives
\[
d(P_0Q_{t_2})=Q_{t_2}\,dP_0,
\qquad
d(P_{t_1}Q_0)=P_{t_1}\,dQ_0.
\]
The coefficients \(Q_{t_2}\) and \(P_{t_1}\) are nowhere zero there.
Since \(P_0\) and \(Q_0\) are linear in \(y\) and \(w\), respectively,
their derivatives in these variables are nonzero along \(M_0\).
Thus the \((y,w)\)-Jacobian is diagonal with nonzero entries.
Along \(M_1\), the corresponding formulas are
\[
d(P_0Q_{t_2})=P_0\,dQ_{t_2},
\qquad
d(P_{t_1}Q_0)=Q_0\,dP_{t_1}.
\]
Here \(P_0\) and \(Q_0\) are nowhere zero, so the \((y,w)\)-Jacobian is
antidiagonal with nonzero entries.
In particular, it is invertible along \(M_0\) and \(M_1\).
This also proves that they lie in the smooth locus.

The two algebraic graphs are given by
\((y,w)=(x,z)\) and
\((y,w)=(r_{-t_1}(x),r_{-t_2}(z))\).
Both coordinate projections restrict to real-algebraic isomorphisms on
these graphs. Their restrictions to the real locus are consequently split
two-sheeted coverings, with real-analytic sheet exchanges.
Finally, \eqref{eq:graph-dynamics} with
\(g_0=\id\) and \(g_1=\Rot_{-\nu(\bt)}\) gives the first rotation.
Interchanging the graphs gives the opposite rotation on \(M_1\).
\end{proof}

\subsection{The smoothing pencil and elimination of the correction term}
\label{sec:smoothing-kam}

We now perturb the defining sections in one fixed direction.
The parameter \(\bt\in\R^2\) adjusts the rotation, while a real
parameter \(\lambda\) controls the smoothing.
For \(v_0\in\cV\setminus\{0\}\), write
\[
s(\bt,\lambda)\coloneqq c(\bt)+\lambda v_0.
\]
For fixed \(\bt\), we call \(X_{c(\bt)}\) the \emph{central fiber}.
The next lemma gives \(v_0\) and a disk of parameters on which every
sufficiently small nonzero \(\lambda\) gives a parameter in \(\cG\).
We call \(v_0\) a \emph{smoothing direction} on this disk.
The bound on \(\lambda\) must be uniform on the disk. Indeed, we will fix
\(\lambda\) first and then adjust \(\bt\) to eliminate Herman's correction
term. The adjusted parameter must still lie in \(\cG\).

\begin{lemma}[A uniformly good smoothing pencil]
\label[lemma]{lem:second-smoothing-pencil}
Let \(B\Subset\{\bt\in\R^2:t_1t_2\ne0\}\) be a nonempty open rectangle.
There are a vector \(v_0\in\cV\setminus\{0\}\), an open disk
\(D_0\Subset B\), and a constant \(\delta_0>0\) such that
\begin{equation}\label{eq:second-good-pencil}
s(\bt,\lambda)\in\cG
\qquad
\text{for } \bt\in\overline{D_0} \text{ and } 0<\abs\lambda<\delta_0.
\end{equation}
\end{lemma}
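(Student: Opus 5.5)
The plan is to turn the uniformity in \(\bt\) into the vanishing of a single polynomial on the plane \((\bt,\lambda)\). Since \(\cG\) is a nonempty real Zariski-open subset of \(\cV\simeq\R^{32}\) (\Cref{prop:good-locus}), its complement is contained in the zero set of a nonzero real polynomial \(\Phi\) on \(\cV\). Fix any \(v_0\in\cV\). Because \(c\) is affine in \(\bt\), the composite
\[
\phi(\bt,\lambda)\coloneqq\Phi\bigl(c(\bt)+\lambda v_0\bigr)
\]
is a real polynomial in \((t_1,t_2,\lambda)\). Expand it in \(\lambda\) as
\[
\phi(\bt,\lambda)=\sum_{k\ge0}a_k(\bt)\lambda^k,
\]
where each \(a_k\) is a polynomial on \(\R^2\). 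If some \(a_k\) is not identically zero, let \(m\) be the least such index. Then \(a_m\) is a nonzero polynomial, so its zero set is a proper algebraic subset of \(\R^2\) with empty interior. We choose an open disk \(D_0\) with \(\overline{D_0}\subset B\) on which \(a_m\) has no zeros. Compactness gives \(\abs{a_m}\ge c>0\) on \(\overline{D_0}\), and bounds \(\abs{a_k}\le C\) for \(k>m\), using the finitely many nonzero coefficients. Consequently
\[
\abs{\phi(\bt,\lambda)}\ge\abs\lambda^m\bigl(c-C'\abs\lambda\bigr)>0
\]
for \(0<\abs\lambda<\delta_0\coloneqq c/(2C')\). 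This gives \eqref{eq:second-good-pencil}, since \(\phi\ne0\) forces \(s(\bt,\lambda)\notin\cV\setminus\cG\).

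So everything reduces to choosing \(v_0\) with \(\phi\not\equiv0\). It suffices to find a single point \((\bt^*,\lambda^*)\) with \(\bt^*\in B\) and \(c(\bt^*)+\lambda^*v_0\in\cG\). Since \(\cG\) is nonempty and Zariski open, it is dense in \(\cV\). Fix any \(\bt^*\in B\) and choose \(v_0\in(\cG-c(\bt^*))\setminus\{0\}\), which is a nonempty open set. Then \(s(\bt^*,1)\in\cG\), so \(\phi(\bt^*,1)\ne0\). Hence \(\phi\) is a nonzero polynomial, and some \(a_k\) is not identically zero.

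One subtlety is that \(m\) may be positive, since \(c(\bt)\notin\cG\) forces \(a_0\equiv0\) on \(B\). This causes no trouble for the argument above. The other subtlety is that \(\cG\) might be described as the complement of several equations rather than one. In that case we take \(\Phi\) to be the product of one nonzero defining function from each bad component, or simply any nonzero polynomial vanishing on \(\cV\setminus\cG\). Such a polynomial exists because \(\cV\setminus\cG\) is a proper Zariski-closed set.

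The main obstacle is making sure that the definition of \(\cG\) in \eqref{eq:penteract-good-locus} really is Zariski open in \(\cV\) in this polynomial sense, so that a polynomial \(\Phi\) exists. The paper asserts this, so once \Cref{prop:good-locus} is granted, the remaining argument is elementary. Note also that \(D_0\Subset B\) requires only that the zero set of \(a_m\) be nowhere dense, which holds for any nonzero polynomial.
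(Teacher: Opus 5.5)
Your argument is essentially the paper's. You expand \(\Phi(c(\bt)+\lambda v_0)\) in powers of \(\lambda\) and take the lowest coefficient \(a_m\) that is not identically zero. Then you choose \(\overline{D_0}\subset B\cap\{a_m\ne0\}\) and absorb the higher-order terms uniformly. Your choice \(v_0=s^*-c(\bt^*)\) with \(s^*\in\cG\) is also the paper's.

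One implication, however, does not follow from what you set up. You only required \(\Phi\) to vanish on \(\cV\setminus\cG\), which gives \(\{\Phi\ne0\}\subset\cG\) and nothing more. Your step ``\(s(\bt^*,1)\in\cG\), so \(\phi(\bt^*,1)\ne0\)'' uses the reverse inclusion. That inclusion can fail for the choices you propose. If \(\cV\setminus\cG=\{p_1=\cdots=p_r=0\}\) with \(r\ge2\), then a single nonzero \(p_i\), or a product of such polynomials, can vanish at points of \(\cG\). As written, \(\phi\not\equiv0\) is therefore not established.

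The repair is one line, in either of two ways.
\begin{enumerate}
\item Choose \(v_0\) so that \(\Phi(c(\bt^*)+v_0)\ne0\). This is possible because \(\{\Phi\ne0\}\) is a nonempty open subset of \(\cV\). The rest of your argument uses only \(\{\Phi\ne0\}\subset\cG\).
\item As the paper does, take \(\Phi=p_1^2+\cdots+p_r^2\). Then \(\cG=\{\Phi\ne0\}\) exactly, and your choice of \(v_0\) works verbatim.
\end{enumerate}

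Two cosmetic points. The constant \(c\) in your lower bound clashes with the affine map \(c(\bt)\). The estimate \(\sum_{k>m}\abs{a_k}\abs{\lambda}^{k-m}\le C'\abs{\lambda}\) needs \(\abs{\lambda}\le1\), so you should also take \(\delta_0\le1\).
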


\begin{proof}
We detect good parameters by a polynomial and choose a direction on which
it does not vanish identically.
By \Cref{prop:good-locus}, the complement of \(\cG\) is a proper real
algebraic set. Choose real polynomials \(p_1,\ldots,p_r\) defining it.
The polynomial \(\Delta\coloneqq p_1^2+\cdots+p_r^2\) then satisfies
\(\cG=\{\Delta\ne0\}\) on \(\cV\).
Choose \(\bt^*\in B\) and \(s^*\in\cG\), and set
\[
v_0\coloneqq s^*-c(\bt^*).
\]
The central fibers are singular by \Cref{prop:singular-toral-family},
so \(v_0\ne0\).

Since \(c\) is affine, \(\Delta(c(\bt)+\lambda v_0)\) is polynomial
in \((\bt,\lambda)\). It vanishes for \(\lambda=0\), but its value at
\((\bt^*,1)\) is \(\Delta(s^*)\ne0\).
Let \(k\geq1\) be the smallest integer for which the coefficient of
\(\lambda^k\) is not identically zero on \(B\).
Factoring out \(\lambda^k\), we obtain
\begin{equation}\label{eq:second-pencil-leading-term}
\Delta(c(\bt)+\lambda v_0)
=\lambda^k\bigl(a(\bt)+\lambda R(\bt,\lambda)\bigr).
\end{equation}
Here \(a\) and \(R\) are polynomials, and \(a\) is not identically zero.
Choose an open disk \(D_0\) with
\(\overline{D_0}\subset B\cap\{a\ne0\}\).
The function \(\abs a\) has a positive minimum on \(\overline{D_0}\),
and \(R\) is bounded near \(\overline{D_0}\times\{0\}\).
Thus there is \(\delta_0>0\) such that
\[
\abs{\lambda R(\bt,\lambda)}<\tfrac12\abs{a(\bt)}
\qquad
\text{for } \bt\in\overline{D_0} \text{ and } \abs\lambda<\delta_0.
\]
The factor in parentheses in \eqref{eq:second-pencil-leading-term} is
nonzero throughout this set.
This proves \eqref{eq:second-good-pencil}, with \(\delta_0\) independent
of \(\bt\).
\end{proof}

Next, we deform the two tori in the real locus of a singular rotation
model along a pencil. The deformation works for any direction \(v_0\). The resulting
dynamics agrees with \(T_{s,\R}\) only at good parameters \(s\).

\begin{lemma}[Persistence of the two graphs]
\label[lemma]{lem:graph-persistence}
Let \(v_0\in\cV\), and let \(\bt_0=(t_{0,1},t_{0,2})\in\R^2\) satisfy
\(t_{0,1}t_{0,2}\ne0\). There are an open disk \(D\) centered at
\(\bt_0\) and a constant \(\delta>0\) with the following properties for
all \((\bt,\lambda)\in\overline D\times[-\delta,\delta]\).
\begin{enumerate}[label=\textup{(\roman*)}]
\item\textup{\textbf{Real locus.}}
There are real-analytic diffeomorphisms
\(g_0(\bt,\lambda),g_1(\bt,\lambda)\colon\T^2_{x,z}\to\T^2_{y,w}\) such
that
\begin{equation}\label{eq:persistent-real-tori}
X_{s(\bt,\lambda),\R}(\R)
=\Gamma(g_0(\bt,\lambda))\sqcup\Gamma(g_1(\bt,\lambda)).
\end{equation}
Both graphs lie in the smooth locus of \(X_{s(\bt,\lambda)}\).
The maps \(g_\ell\) and their inverses depend real-analytically on
\((\bt,\lambda)\) and the torus variable. Moreover,
\(g_0(\bt,0)=\id\) and \(g_1(\bt,0)=\Rot_{-\nu(\bt)}\).
\item\textup{\textbf{Dynamics.}}
The map
\begin{equation}\label{eq:persistent-central-rotation}
f_{\bt,\lambda}\coloneqq
g_1(\bt,\lambda)^{-1}\circ g_0(\bt,\lambda)
\end{equation}
belongs to \(\operatorname{Diff}^{\omega}_0(\T^2)\) and depends
real-analytically on \((\bt,\lambda)\) and the torus variable.
It satisfies \(f_{\bt,0}=\Rot_{\nu(\bt)}\).
If \(s(\bt,\lambda)\in\cG\), then \(f_{\bt,\lambda}\) represents the
restriction of \(T_{s(\bt,\lambda),\R}\) to
\(\Gamma(g_0(\bt,\lambda))\) in its \((x,z)\)-coordinates.
\end{enumerate}
\end{lemma}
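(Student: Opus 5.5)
The plan is to apply the real-analytic implicit function theorem, uniformly along each central torus, to the map
\[
\Phi(\bt,\lambda;x,z;y,w)\coloneqq\bigl(F_{1,s(\bt,\lambda)},F_{2,s(\bt,\lambda)}\bigr),
\]
solved for the \((y,w)\)-variables. This is done in Cayley coordinates on \(\T^2_{x,z}\times\T^2_{y,w}\), after dehomogenizing in local affine charts. At \(\lambda=0\), the proof of \Cref{prop:singular-toral-family} shows that along \(M_0=\Gamma(\id)\) and \(M_1=\Gamma(\Rot_{-\nu(\bt)})\) the \((y,w)\)-Jacobian is invertible: it is diagonal, respectively antidiagonal, with nonzero entries. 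These entries are continuous in \(\bt\) and in the point of the compact torus. Hence on \(\overline{D}\times\T^2_{x,z}\) with \(D\) a small disk around \(\bt_0\), staying inside \(t_1t_2\neq0\), they are bounded away from zero.

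\textbf{Step 1 (local solutions glue).} Fix \(\bt'\in\overline D\) and \((x,z)\in\T^2\). The implicit function theorem gives a neighborhood of \((\bt',0,x,z)\) and a unique real-analytic solution \((y,w)=G_\ell(\bt,\lambda,x,z)\) near the point of \(M_\ell\) over \((x,z)\). Uniqueness makes these local solutions agree on overlaps. By compactness of \(\overline D\times\T^2\), they glue to real-analytic maps \(g_\ell(\bt,\lambda)\colon\T^2_{x,z}\to\T^2_{y,w}\) for \(\abs\lambda\le\delta\), with \(g_0(\bt,0)=\id\) and \(g_1(\bt,0)=\Rot_{-\nu(\bt)}\).

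\textbf{Step 2 (diffeomorphisms).} The maps \(g_\ell(\bt,\lambda)\) are \(C^1\)-close to the diffeomorphisms \(g_\ell(\bt,0)\), uniformly in \(\bt\). After shrinking \(\delta\), they are therefore diffeomorphisms; for instance, a local diffeomorphism of \(\T^2\) homotopic to a diffeomorphism is a covering of degree one. Their inverses depend real-analytically on all variables by the inverse function theorem applied to \((\bt,\lambda,x,z)\mapsto(\bt,\lambda,g_\ell)\). The graphs lie in the smooth locus because the Jacobian of \((F_1,F_2)\) still has rank two there.

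\textbf{Step 3 (no other real points).} This is the main obstacle: excluding real points of \(X_{s(\bt,\lambda),\R}\) away from the two graphs. I would argue by compactness and contradiction. Suppose \(\lambda_n\to0\), \(\bt_n\to\bt\in\overline D\), and real points \(p_n\) satisfy \(p_n\notin\Gamma(g_0)\cup\Gamma(g_1)\). A subsequence converges, since \(P_\R(\R)\) is compact, to a real point of \(X_{c(\bt)}\), which lies in \(M_0\sqcup M_1\) by \Cref{prop:singular-toral-family}. The uniqueness part of the implicit function theorem is uniform on a tubular neighborhood of \(M_\ell\) in \(P_\R(\R)\). Therefore \(p_n\) lies on \(\Gamma(g_\ell(\bt_n,\lambda_n))\) for large \(n\), a contradiction. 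A further shrinking of \(\delta\) then gives \eqref{eq:persistent-real-tori}. The two graphs are disjoint because \(M_0\cap M_1=\varnothing\) and they stay \(C^0\)-close to \(M_0\) and \(M_1\).

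\textbf{Step 4 (dynamics).} The map \(f_{\bt,\lambda}=g_1^{-1}\circ g_0\) is real-analytic in all variables. Its value at \(\lambda=0\) is \(\Rot_{\nu(\bt)}\), which is isotopic to the identity, so \(f_{\bt,\lambda}\in\operatorname{Diff}^\omega_0(\T^2)\) by continuity in \(\lambda\). Now suppose \(s\in\cG\). By \Cref{lem:quadratic-cover-involution} and the remark following it, \(A_{s,\R}\) and \(B_{s,\R}\) send a real point to the other point of its \(\pi_{yw}\)- or \(\pi_{xz}\)-fiber. The real locus consists of two graphs of diffeomorphisms, so each fiber over a real point contains one real point on each graph. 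Two real points in the fiber of a degree-two map exhaust it, so the deck involutions restrict to the sheet exchanges. Then \eqref{eq:graph-dynamics} identifies \(T_{s,\R}|_{\Gamma(g_0)}\) with \(f_{\bt,\lambda}\).
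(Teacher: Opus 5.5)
Your proposal is correct and follows essentially the same route as the paper: the implicit function theorem in the \((y,w)\)-variables using the diagonal/antidiagonal Jacobian from \Cref{prop:singular-toral-family}, gluing by uniqueness and compactness, a compactness-and-contradiction argument to exclude extra real points, \(C^1\)-closeness to get diffeomorphisms, and \eqref{eq:graph-dynamics} with the deck involutions exchanging the two real preimages. The differences are cosmetic: you center the implicit function theorem at every parameter in \(\overline D\) rather than only at \((\bt_0,0)\), and you spell out the degree-one covering and length-two fiber arguments that the paper leaves implicit.
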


\begin{proof}
At \((\bt_0,0)\), the real locus is
\(\Gamma(\id)\sqcup\Gamma(\Rot_{-\nu(\bt_0)})\).
Write \(s=s(\bt,\lambda)\). In affine charts of \(P_\R\), the surface
\(X_{s,\R}\) is given by the two polynomial equations \(F_{1,s}=F_{2,s}=0\).
At \((\bt_0,0)\), their Jacobian in the \((y,w)\)-variables is invertible
along both graphs by \Cref{prop:singular-toral-family}.
We apply the real-analytic implicit function theorem to these equations.
Near each point of the two graphs, it solves the equations for \((y,w)\)
in terms of \((\bt,\lambda,x,z)\).
By compactness, these local solutions exist on a common parameter neighborhood of
\((\bt_0,0)\). Choose disjoint open neighborhoods \(N_0\) and \(N_1\) of
\(\Gamma(\id)\) and \(\Gamma(\Rot_{-\nu(\bt_0)})\), respectively, on which
these solutions are unique.
The local solutions therefore agree on overlaps and glue to maps
\(g_\ell(\bt,\lambda)\colon\T^2_{x,z}\to\T^2_{y,w}\), for \(\ell=0,1\).
These maps depend real-analytically on the parameters and the torus
variable.
After shrinking the parameter neighborhood, the real locus has no points
outside \(N_0\cup N_1\).
Otherwise, compactness of \(P_\R(\R)\) would give such points converging,
as \((\bt,\lambda)\to(\bt_0,0)\), to a real point of \(X_{c(\bt_0)}\)
outside \(N_0\cup N_1\), which is impossible.
Choose a small disk \(D\) centered at \(\bt_0\) and \(\delta>0\) so that
this parameter neighborhood contains all \((\bt,\lambda)\in\overline
D\times[-\delta,\delta]\).
This gives \eqref{eq:persistent-real-tori}.
The \((y,w)\)-Jacobians remain invertible, so both graphs lie in the
smooth locus.

For \(\bt\in\overline D\), \Cref{prop:singular-toral-family} gives
\(X_{c(\bt),\R}(\R)=\Gamma(\id)\sqcup\Gamma(\Rot_{-\nu(\bt)})\).
The graph \(\Gamma(\id)\) lies in \(N_0\), so it equals
\(\Gamma(g_0(\bt,0))\). Hence \(g_0(\bt,0)=\id\) and
\(g_1(\bt,0)=\Rot_{-\nu(\bt)}\).
After shrinking \(D\) and \(\delta\), the graph maps are \(C^1\)-close
to these rotations and hence are diffeomorphisms.
The parameterized inverse function theorem gives real-analytic dependence
of their inverses as well. This proves~\textup{(i)}.

The map \(f_{\bt,\lambda}\) therefore depends real-analytically on all
variables, and \(f_{\bt,0}=\Rot_{\nu(\bt)}\).
Varying the smoothing parameter from \(0\) to \(\lambda\) joins this
rotation to \(f_{\bt,\lambda}\).
Since rotations are isotopic to the identity,
\(f_{\bt,\lambda}\in\operatorname{Diff}^{\omega}_0(\T^2)\).
Now suppose that \(s=s(\bt,\lambda)\in\cG\).
By~\textup{(i)}, the projections \(\pi_{yw}\) and \(\pi_{xz}\) from the
real locus have exactly two preimages over every point of
\(\T^2_{y,w}\) and \(\T^2_{x,z}\), respectively, one on each graph.
The deck involutions \(A_{s,\R}\) and \(B_{s,\R}\) exchange these
points. Thus \eqref{eq:graph-dynamics} shows that \(f_{\bt,\lambda}\)
represents the restriction of \(T_{s,\R}\) to the first component in
its \((x,z)\)-coordinates.
\end{proof}

The last step adjusts \(\bt\) so that Herman's correction term vanishes
after smoothing. At \(\lambda=0\), the correction term has a nondegenerate
zero. The correction term is only known to depend continuously on the
parameters. The next lemma therefore uses the topological degree to show
that this zero persists.

\begin{lemma}[Persistence of a zero]
\label[lemma]{lem:phase-selection}
Let \(D\subset\R^2\) be an open disk, let \(\delta>0\), and let
\(\Theta\colon\overline D\times[-\delta,\delta]\to\R^2\) be continuous.
Assume that \(\Theta(\,\cdot\,,0)\) is \(C^1\) on \(D\) and has exactly one
zero \(\bt_0\) in \(\overline D\), which lies in \(D\).
Assume also that its Jacobian determinant at \(\bt_0\) is nonzero.
Then there is \(0<\delta_1\leq\delta\) with the following property.
For every \(\lambda\) with \(\abs\lambda<\delta_1\), there exists
\(\bt_\lambda\in D\) such that \(\Theta(\bt_\lambda,\lambda)=\bzero\).
\end{lemma}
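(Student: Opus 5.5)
The plan is a standard Brouwer degree argument, using the degree conventions fixed in the introduction. Since \(\bt_0\) is a regular zero of the \(C^1\) map \(\Theta(\,\cdot\,,0)\) and the only zero in \(\overline D\), the degree is
\[
\deg\bigl(\Theta(\,\cdot\,,0),D,\bzero\bigr)=\operatorname{sign}\det d_{\bt}\Theta(\bt_0,0)=\pm1\neq0.
\]
It is well defined because \(\bzero\notin\Theta(\partial D,0)\). Strictly, the regular-value formula for the degree needs \(C^1\) only near the zero set. To be safe, I would first replace \(D\) by a small disk \(D'\) centered at \(\bt_0\) with \(\overline{D'}\subset D\). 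Then excision (the degree is unchanged since there are no zeros in \(\overline D\setminus D'\)) lets us compute the degree on \(D'\), where the map is \(C^1\) on a neighborhood of the closure.

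Next, I would obtain a uniform lower bound on the boundary. The set \(\partial D\) is compact, and \(\Theta(\,\cdot\,,0)\) does not vanish there. Hence
\[
m\coloneqq\min_{\partial D}\abs{\Theta(\,\cdot\,,0)}>0.
\]
The map \(\Theta\) is continuous on the compact set \(\overline D\times[-\delta,\delta]\), so it is uniformly continuous. Therefore there is \(0<\delta_1\leq\delta\) such that \(\abs{\Theta(\bt,\lambda)-\Theta(\bt,0)}<m\) for all \(\bt\in\overline D\) and \(\abs\lambda<\delta_1\).

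Finally, fix \(\lambda\) with \(\abs\lambda<\delta_1\) and consider the homotopy \(H(\bt,u)\coloneqq\Theta(\bt,u\lambda)\) for \(u\in[0,1]\). It is continuous on \(\overline D\times[0,1]\). For \(\bt\in\partial D\) we have \(\abs{H(\bt,u)}>\abs{\Theta(\bt,0)}-m\geq0\), so the homotopy avoids \(\bzero\) on \(\partial D\). Homotopy invariance then gives \(\deg(\Theta(\,\cdot\,,\lambda),D,\bzero)=\pm1\neq0\). By the solution property of the degree, there is \(\bt_\lambda\in D\) with \(\Theta(\bt_\lambda,\lambda)=\bzero\).

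There is no real obstacle here. The only delicate point is the justification of the degree computation at \(\lambda=0\) from the \(C^1\) hypothesis on the open disk alone, and the excision step to \(D'\) handles it.
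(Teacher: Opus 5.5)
Your proposal is correct and follows the paper's proof essentially step for step. Both compute the degree \(\pm1\) at \(\lambda=0\) from the regular-value formula, take the boundary minimum \(m>0\), choose \(\delta_1\) by uniform continuity, and conclude by homotopy invariance. The only differences are cosmetic. You use the homotopy \(\Theta(\,\cdot\,,u\lambda)\) instead of the straight-line homotopy, and you add an excision step to justify the \(C^1\) regular-value formula; both are valid.
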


\begin{proof}
We show that \(\Theta(\,\cdot\,,0)\) has nonzero degree on \(D\) and that
small perturbations keep this degree.
Since \(\bt_0\) is the only zero and it is nondegenerate, \(\bzero\) is a
regular value of \(\Theta(\,\cdot\,,0)\) on \(D\).
By the regular-value formula, \(\deg(\Theta(\,\cdot\,,0),D,\bzero)\) is
the sign of the Jacobian determinant at \(\bt_0\), hence \(\pm1\).
The boundary values at \(\lambda=0\) are separated from zero by
\[
m\coloneqq\min_{\bt\in\partial D}\abs{\Theta(\bt,0)}>0.
\]
Uniform continuity on the compact set \(\overline D\times[-\delta,\delta]\)
gives \(0<\delta_1\leq\delta\) such that
\[
\abs{\Theta(\bt,\lambda)-\Theta(\bt,0)}<m
\qquad\text{for } \bt\in\partial D \text{ and } \abs\lambda<\delta_1.
\]
Fix such a \(\lambda\).
The straight-line homotopy from \(\Theta(\,\cdot\,,0)\) to
\(\Theta(\,\cdot\,,\lambda)\) therefore avoids zero on \(\partial D\).
By homotopy invariance of the degree, we have
\[
\deg(\Theta(\,\cdot\,,\lambda),D,\bzero)
=\deg(\Theta(\,\cdot\,,0),D,\bzero)\ne0.
\]
Hence \(\Theta(\,\cdot\,,\lambda)\) has a zero \(\bt_\lambda\in D\).
\end{proof}

We now combine
\Cref{lem:second-smoothing-pencil,lem:graph-persistence,lem:phase-selection}
with Herman's theorem.
Recall the real-analytic diffeomorphism
\(\nu\colon\R^2\to(-\pi,\pi)^2\) from
\eqref{eq:singular-coefficient-family}, given by
\(\nu(\bt)=(2\arctan t_1,2\arctan t_2)\).
Its inverse is
\(\nu^{-1}(\bbeta)=(\tan(\beta_1/2),\tan(\beta_2/2))\).

\begin{proof}[Proof of \Cref{thm:prescribed-rotation-smoothing}]
We will find the required parameter in the form \(s(\bt,\lambda)\).
Its data are chosen in the following order:
\[
\underset{\makebox[0pt]{\scriptsize\Cref{lem:second-smoothing-pencil}}}{(v_0,D_0,\delta_0)}
\quad\longrightarrow\quad(\balpha, \bt_0=\nu^{-1}(\balpha))
\quad\longrightarrow\quad
\underset{\makebox[0pt]{\scriptsize\Cref{lem:graph-persistence}}}{(D,\delta)}
\quad\longrightarrow\quad\lambda
\quad\longrightarrow\quad
\underset{\makebox[0pt]{\scriptsize\Cref{lem:phase-selection}}}{\bt_\lambda}.
\]
Here \Cref{lem:graph-persistence} gives the disk \(D\) centered at
\(\bt_0\) and the constant \(\delta>0\).
We shrink \(D\) and \(\delta\) so that the smoothed parameters stay in
\(\cG\), Herman's theorem applies, and the zero of the correction term
persists.
We then fix \(0<\abs\lambda<\delta\) and choose \(\bt_\lambda\) to cancel
the correction term.

Fix a nonempty open rectangle \(B\Subset\{t_1t_2\ne0\}\), and choose
\(v_0,D_0,\delta_0\) as in \Cref{lem:second-smoothing-pencil}.
Since \(\nu\) is a diffeomorphism, we can choose an open disk
\(Q_0\Subset\nu(D_0)\).
Fix \(\gamma>0\) and a prescribed vector
\(\balpha\in Q_0\cap\operatorname{DC}(\gamma,3)\).
Set \(\bt_0\coloneqq\nu^{-1}(\balpha)\).
For every sufficiently small nonzero \(\lambda\), we will choose
\(\bt_\lambda\) near \(\bt_0\) and set
\[
s_\lambda\coloneqq s(\bt_\lambda,\lambda)
=c(\bt_\lambda)+\lambda v_0.
\]
By \Cref{lem:second-smoothing-pencil}, this parameter belongs to \(\cG\)
whenever \(\bt_\lambda\in\overline{D_0}\) and \(0<\abs\lambda<\delta_0\).

Apply \Cref{lem:graph-persistence} to \(v_0\) and \(\bt_0\).
After shrinking the resulting disk \(D\) and constant \(\delta\), we may
assume that \(\overline D\subset D_0\) and \(\delta<\delta_0\).
Then \(s(\bt,\lambda)\in\cG\) for all \(\bt\in\overline D\) and
\(0<\abs\lambda<\delta\). For these parameters,
\(f_{\bt,\lambda}\) represents the restriction of
\(T_{s(\bt,\lambda),\R}\) to the first real component.
It remains to adjust \(\bt\) so that Herman's correction term vanishes.

Apply \Cref{thm:herman} to \(\balpha\), with \(\eta=\pi/2\).
It gives an integer \(q\), a \(C^q\)-open neighborhood \(\cU_{\balpha}\),
and a continuous correction term \(\vartheta_{\balpha}\).
Shrink \(D\) so that
\(\norm{\nu(\bt)-\balpha}_\infty<r_{\balpha}\) on \(\overline D\).
Then \(f_{\bt,0}\in\cU_{\balpha}\) throughout \(\overline D\).
Since \(f_{\bt,\lambda}\) depends real-analytically on the parameters and
the torus variable, and \(\T^2\) is compact, it depends continuously on
\((\bt,\lambda)\) in the \(C^q\)-topology.
By compactness of \(\overline D\), decrease \(\delta\) so that
\(f_{\bt,\lambda}\in\cU_{\balpha}\) for all
\((\bt,\lambda)\in\overline D\times[-\delta,\delta]\).
We can therefore define the correction term map
\[
\Theta(\bt,\lambda)\coloneqq
\vartheta_{\balpha}(f_{\bt,\lambda}).
\]
This map is continuous, and \eqref{eq:herman-rotation-counterterm} gives
\[
\Theta(\bt,0)=\nu(\bt)-\balpha.
\]

At \(\lambda=0\), this map is real-analytic in \(\bt\).
Since \(\nu\) is a diffeomorphism, \(\bt_0\) is its only zero in
\(\overline D\), and its Jacobian determinant there is nonzero.
Apply \Cref{lem:phase-selection}, and decrease \(\delta\) accordingly.
For each \(0<\abs\lambda<\delta\), the lemma gives \(\bt_\lambda\in D\)
with \(\Theta(\bt_\lambda,\lambda)=\bzero\).
This chooses the parameter \(s_\lambda\in\cG\) introduced at the start.

Since the correction term vanishes, Herman's theorem gives
\(\psi_\lambda\in\operatorname{Diff}^{\omega}_0(\T^2)\) such that
\begin{equation}\label{eq:second-exact-conjugacy}
f_{\bt_\lambda,\lambda}
=\psi_\lambda\circ\Rot_{\balpha}\circ\psi_\lambda^{-1}.
\end{equation}
Fix such a \(\lambda\), set \(s=s_\lambda\), and denote the two graphs
in \eqref{eq:persistent-real-tori} by \(M_{0,s}\) and \(M_{1,s}\).
In the fixed Cayley coordinates, the conjugacy on the first component is
\[
j_{0,s}(x,z)\coloneqq
\bigl(\psi_\lambda(x,z),
g_0(\bt_\lambda,\lambda)(\psi_\lambda(x,z))\bigr).
\]
By \Cref{lem:graph-persistence}\textup{(ii)}, this map satisfies
\begin{equation}\label{eq:persistent-good-compatibility}
T_{s,\R}\circ j_{0,s}=j_{0,s}\circ\Rot_{\balpha}.
\end{equation}
The involution \(A_{s,\R}\) exchanges the two components and satisfies
\(A_{s,\R}T_{s,\R}A_{s,\R}=T_{s,\R}^{-1}\).
Hence \(j_{1,s}\coloneqq A_{s,\R}\circ j_{0,s}\) satisfies
\[
T_{s,\R}\circ j_{1,s}=j_{1,s}\circ\Rot_{-\balpha}.
\]
Together with \(s\in\cG\), this proves the theorem.
\end{proof}

\section{Proofs of the main results}\label{sec:proof-main}

We now assemble the ingredients. The smoothing theorem supplies the real
dynamics, \Cref{prop:k3-entropy} supplies the entropy, and the
complexification of the rotation tori supplies the invariant biannuli.
At the end of the section, we prove that the two tori of the real locus
lie in distinct Fatou components.

\begin{proof}[Proof of \Cref{thm:complex-main} and \Cref{thm:real-main}]
Let \(Q_0\) be the disk in \Cref{thm:prescribed-rotation-smoothing}.
By \Cref{lem:diophantine-choice}, choose \(\gamma>0\) and
\(\balpha\in Q_0\cap\operatorname{DC}(\gamma,3)\).
Let \(s\in\cG\) be the corresponding parameter given by
\Cref{thm:prescribed-rotation-smoothing}, and set
\[
X_{\R}\coloneqq X_{s,\R},
\qquad T_{\R}\coloneqq T_{s,\R}.
\]
Then \(X_{\R}\) is a smooth projective real K3 surface, and
\Cref{prop:k3-entropy} gives
\[
\rho(X)\geq4,
\qquad
h_{\mathrm{top}}(T)=\log(7+4\sqrt3)>0.
\]

Set \(M_\ell\coloneqq M_{\ell,s}\). The smoothing theorem gives
\[
X_{\R}(\R)=M_0\sqcup M_1.
\]
It also gives real-analytic conjugacies
\(j_{\ell,s}\colon\T^2\to M_\ell\).
Using the identification \(\T^2\simeq(\S^1)^2\) from the introduction,
regard \(j_{\ell,s}\) as a map from \((\S^1)^2\) and denote it by
\(j_{\ell,\R}\). The rotation conjugacies become
\[
T_{\R}\circ j_{\ell,\R}
=j_{\ell,\R}\circ R_{(-1)^\ell\balpha},
\qquad \text{for } \ell=0,1.
\]
Thus both components are invariant, and the real dynamics has zero
topological entropy because each rotation does.

As components of the real locus of the smooth real surface \(X_{\R}\),
the tori \(M_\ell\) are maximally totally real in \(X\). Apply
\Cref{prop:herman-ring-complexify-torus} to \(T\) and \(j_{\ell,\R}\).
The tori \(M_0\) and \(M_1\) are disjoint and compact. We may therefore
shrink the two annular neighborhoods to a common width \(r>0\) with
disjoint images. The resulting holomorphic embeddings satisfy
\[
j_\ell\colon\cA_r\longrightarrow X,
\qquad j_\ell|_{(\S^1)^2}=j_{\ell,\R},
\qquad \text{for } \ell=0,1.
\]
They conjugate \(T\) to the rotations:
\[
T\circ j_\ell=j_\ell\circ R_{(-1)^\ell\balpha}.
\]
Set \(W_\ell\coloneqq j_\ell(\cA_r)\). These disjoint open sets satisfy
\[
X_{\R}(\R)\subset W_0\cup W_1\subset\Fat(T).
\]
This proves \Cref{thm:real-main}. Forgetting the real structure and taking
\(W_0\) proves \Cref{thm:complex-main}.
\end{proof}

It remains to prove \Cref{prop:fatou-components-and-coupling}.
The key step extends the rotations on a biannulus to the whole Fatou
component containing it.
This uses results on rotation domains, which we now recall.

\begin{definition}[Rotation domain]\label[definition]{dfn:rotation-domain}
Let \(f\) be an automorphism of a compact complex surface \(Y\).
An \(f\)-invariant Fatou component \(U\) is a \emph{rotation domain}
if there are integers \(n_j\) with \(\abs{n_j}\to\infty\) such that
\[
f^{n_j}|_U\longrightarrow\id_U
\quad\text{uniformly on compact subsets of }U.
\]
\end{definition}

The next theorem follows from Propositions~1.1 and~1.3 and Theorem~1.4
of Bedford--Kim \cite{BedfordKim2012}.

\begin{theorem}[Volume-preserving automorphisms]
\label[theorem]{thm:rotation-domain-structure}
Let \(f\) be an automorphism of a compact complex surface \(Y\) that
preserves a smooth positive volume form.
\begin{enumerate}[label=\textup{(\roman*)}]
\item\textup{\textbf{Rotation domains.}}
Every \(f\)-invariant Fatou component \(U\) is a rotation domain.
\item\textup{\textbf{Group action.}}
For each such \(U\), the closure of \(\{f^n|_U:n\in\Z\}\) in
\(\operatorname{Aut}(U)\), taken in the compact-open topology, is a
compact abelian Lie group.
Its action on \(U\) is real-analytic.
\end{enumerate}
\end{theorem}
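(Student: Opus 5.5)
The plan is to deduce both parts from Bedford--Kim's analysis of Fatou components of volume-preserving automorphisms \cite{BedfordKim2012}, reconstructing their argument along these lines. Fix an \(f\)-invariant Fatou component \(U\) and a smooth positive volume form \(\mu\) with \(f^*\mu=\mu\). By definition of \(\Fat(f)\) and compactness of \(Y\), the family \(\{f^n|_U:n\in\Z\}\) is equicontinuous with values in a compact space. Arzel\`a--Ascoli therefore makes it normal: every sequence has a subsequence converging locally uniformly on \(U\) to a holomorphic map \(h\colon U\to Y\).

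\textbf{Step 1: limits are automorphisms of \(U\).} Let \(h=\lim f^{n_j}|_U\). Since \(f^{n_j}\) preserves \(\mu\), the Jacobian identity \((f^{n_j})^*\mu=\mu\) passes to the limit by Cauchy estimates, so \(h^*\mu=\mu\). Hence \(h\) is a local biholomorphism and \(h(U)\) is open.

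\begin{itemize}
\item To see that \(h(U)\subset U\), I would argue that \(h(U)\subset\overline U\) because each \(f^{n_j}(U)=U\). Being open, \(h(U)\) then lies in the interior of \(\overline U\). I also need \(h(U)\subset\Fat(f)\): near \(h(p)\), the iterates \(f^m\) should be approximated by \(f^{m+n_j}\circ(f^{n_j})^{-1}\), and the inverse maps are controlled by equicontinuity of backward iterates on \(U\).
\item Injectivity of \(h\) follows from a Hurwitz-type argument applied to the injective, volume-preserving maps \(f^{n_j}\).
\item Applying the same reasoning to \(f^{-n_j}\) gives a limit \(h'\) with \(h\circ h'=\id\) on \(U\), so \(h\in\operatorname{Aut}(U)\).
\end{itemize}

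This is the step I expect to be the main obstacle, namely excluding that a limit map pushes \(U\) into \(\partial U\) or out of the Fatou set. Volume preservation is the key tool, and I would follow Bedford--Kim, Propositions~1.1 and~1.3, here.

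\textbf{Step 2: the group \(G\).} Let \(G\) be the closure of \(\{f^n|_U\}\) in \(\operatorname{Aut}(U)\) with the compact-open topology. By Step~1 and normality, \(G\) is compact. It is abelian as the closure of a cyclic group. By the Bochner--Montgomery theorem, a compact group acting effectively by holomorphic automorphisms of a complex manifold is a Lie group whose action is real-analytic, which gives part~(ii) (Theorem~1.4 of \cite{BedfordKim2012}).

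\textbf{Step 3: rotation domain.} In the compact monothetic group \(G\), the identity is a limit of \(f^{n_j}\) with \(\abs{n_j}\to\infty\). Indeed, pick a convergent subsequence \(f^{m_j}\to g\) with \(m_{j+1}-m_j\to\infty\); then \(f^{m_{j+1}-m_j}=f^{m_{j+1}}\circ f^{-m_j}\to g\circ g^{-1}=\id\) by continuity of group operations. This gives part~(i).
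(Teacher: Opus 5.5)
Your proposal is correct and takes the paper's route: the paper proves this statement by citing Propositions~1.1 and~1.3 and Theorem~1.4 of Bedford--Kim. Your Steps~1--3 reconstruct exactly that content, namely nondegenerate limits from volume preservation, compactness of the closure \(G\) in \(\operatorname{Aut}(U)\), Bochner--Montgomery for the Lie and real-analytic structure, and recurrence to the identity in a compact monothetic group. One place in Step~1 needs tightening: near \(h(p)\) you cannot appeal to equicontinuity of backward iterates on \(U\) before knowing \(h(p)\in U\). Instead, the degree argument you already use for injectivity shows that, for a small ball \(B\ni p\), the image \(f^{n_j}(B)\) contains a fixed neighborhood of \(h(p)\) for all large \(j\); since \(f^{n_j}(B)\subset U\), this gives \(h(p)\in U\) directly.
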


We now outline the proof of \Cref{prop:fatou-components-and-coupling}.
Neither the disjointness of \(W_0\) and \(W_1\) nor the opposite rotation
vectors separate their Fatou components.
The additional ingredient is the sign \(A^*\Omega=-\Omega\) from
\Cref{prop:k3-entropy}.
Suppose that both tori lay in one Fatou component \(U\).
By \Cref{thm:rotation-domain-structure}, the rotations on \(W_0\) would
then extend to an action of \((\S^1)^2\) on all of \(U\).
Evaluating \(\Omega\) on the two infinitesimal generators of this action
would give a nonzero constant on \(U\).
The involution \(A\) would preserve \(U\) and reverse both generators.
These two sign changes cancel, so they leave the constant unchanged.
Since \(A\) also negates \(\Omega\), the constant would equal its own
negative, a contradiction.

\begin{proof}[Proof of \Cref{prop:fatou-components-and-coupling}]
The construction in \Cref{thm:prescribed-rotation-smoothing} gives the
real deck involution \(A\coloneqq A_s\) with
\[
A(M_0)=M_1,\qquad ATA=T^{-1}.
\]
Thus \(AT^nA=T^{-n}\) for every \(n\in\Z\), so \(A\) maps \(\Fat(T)\) onto
itself. Hence \(A\) permutes the Fatou components, and \(A(U_0)=U_1\).
Restricting \(AT=T^{-1}A\) to \(M_0\) gives the stated conjugacy.

Suppose, for contradiction, that \(U_0=U_1=:U\).
The component \(U\) is \(T\)-invariant because it contains the invariant
biannulus \(W_0\). The automorphism \(T\) preserves the canonical volume
\(\vol_X\) in \eqref{eq:canonical-volume}, which is given by a smooth
positive volume form.
Thus \Cref{thm:rotation-domain-structure}\textup{(i)} shows that \(U\) is
a rotation domain.
By \Cref{thm:rotation-domain-structure}\textup{(ii)}, the closure
\[
G\coloneqq\overline{\{T^n|_U:n\in\Z\}}
\subset\operatorname{Aut}(U)
\]
is a compact abelian Lie group acting real-analytically on \(U\).

We first identify \(G\) with the rotation group of \(\cA_r\), which is \((\S^1)^2\).
Each \(g\in G\) is a locally uniform limit of iterates \(T^{n_j}|_U\).
On \(\cA_r\), we have \(T^{n_j}\circ j_0=j_0\circ R_{n_j\balpha}\).
Passing to a subsequence with \(R_{n_j\balpha}\to R_{\bbeta}\) gives
\(g\circ j_0=j_0\circ R_{\bbeta}\).
Hence \(g\mapsto j_0^{-1}\circ g|_{W_0}\circ j_0\) is a continuous
homomorphism from \(G\) to the rotation group \((\S^1)^2\) of \(\cA_r\).
Its image is compact and contains the dense subgroup
\(\{R_{n\balpha}:n\in\Z\}\), so it is surjective.
The identity theorem gives injectivity.
Since \(G\) is compact, this map is an isomorphism of topological groups.
For \(\bbeta\in\R^2\), let \(g_{\bbeta}\in G\) be the element with
\(g_{\bbeta}\circ j_0=j_0\circ R_{\bbeta}\) on \(\cA_r\).
Let \(\xi_1\) and \(\xi_2\) be the vector fields on \(U\) generating the
flows \(t\mapsto g_{(t,0)}\) and \(t\mapsto g_{(0,t)}\), respectively.
They are holomorphic because each element of \(G\) is holomorphic.

Choose a nonzero holomorphic two-form \(\Omega\) on \(X\).
By \Cref{prop:k3-entropy}, we have \(T^*\Omega=\Omega\) and
\(A^*\Omega=-\Omega\).
Passing to the limit in \((T^{n_j})^*\Omega=\Omega\) shows that \(G\)
preserves \(\Omega\).
Since \(G\) is abelian, the generators satisfy
\[
[\xi_1,\xi_2]=0,
\qquad
\mathcal L_{\xi_1}\Omega=\mathcal L_{\xi_2}\Omega=0.
\]
Define the holomorphic function \(h\colon U\to\C\) by
\(h\coloneqq\Omega(\xi_1,\xi_2)\).
Since \(\Omega\) is closed, Cartan's formula
\(\mathcal L_{\xi_i}=d\iota_{\xi_i}+\iota_{\xi_i}d\) gives
\(d(\iota_{\xi_i}\Omega)=\mathcal L_{\xi_i}\Omega=0\) for \(i=1,2\).
Combining this with the identity
\(\iota_{[\xi_1,\xi_2]}=\mathcal L_{\xi_1}\iota_{\xi_2}
-\iota_{\xi_2}\mathcal L_{\xi_1}\), we obtain
\[
0=\iota_{[\xi_1,\xi_2]}\Omega
=\mathcal L_{\xi_1}(\iota_{\xi_2}\Omega)
=d(\iota_{\xi_1}\iota_{\xi_2}\Omega)
+\iota_{\xi_1}d(\iota_{\xi_2}\Omega)
=-dh.
\]
Since \(U\) is connected, \(h\) is constant.
The flows \(t\mapsto R_{(t,0)}\) and \(t\mapsto R_{(0,t)}\) are generated
by \(iz_1\partial_{z_1}\) and \(iz_2\partial_{z_2}\).
Hence \(\xi_1\) and \(\xi_2\) are their pushforwards under \(j_0\) on
\(W_0\).
They are therefore linearly independent over \(\C\) at every point of
\(W_0\).
The form \(\Omega\) is nowhere vanishing, so this constant is nonzero.

Since \(A(U)=U\), the relation \(AT^nA=T^{-n}\) extends by continuity to
\(AgA=g^{-1}\) for every \(g\in G\).
In particular, \(Ag_{\bbeta}A=g_{-\bbeta}\) for every \(\bbeta\in\R^2\).
Differentiating the two flows at \(t=0\) gives
\[
dA_x\bigl(\xi_i(x)\bigr)=-\xi_i(Ax),
\qquad \text{for } x\in U \text{ and } i=1,2.
\]
Since \(\Omega\) is bilinear, the two minus signs cancel.
For every \(x\in U\), we obtain
\[
\begin{aligned}
h(Ax)
&=\Omega_{Ax}\bigl(\xi_1(Ax),\xi_2(Ax)\bigr)\\
&=(A^*\Omega)_x\bigl(\xi_1(x),\xi_2(x)\bigr)
=-h(x).
\end{aligned}
\]
This contradicts the fact that \(h\) is a nonzero constant.
Therefore \(U_0\ne U_1\).
\end{proof}

\begin{remark}
For every nonzero holomorphic two-form \(\Omega\) defined over \(\R\),
the two real components have equal area:
\[
\int_{M_0}|\Omega|=\int_{M_1}|\Omega|.
\]
Here \(|\Omega|\) is the area measure defined by the restriction of
\(\Omega\) to each real component. The equality follows from
\(A(M_0)=M_1\) and \(A^*\Omega=-\Omega\).
\end{remark}

\section{A conjecture on metric entropy}
\label{sec:metric-entropy-conjecture}

Let \(T\) be an automorphism of a K3 surface \(X\). Recall that the
canonical volume \(\vol_X\) in \eqref{eq:canonical-volume} is a smooth
\(T\)-invariant probability measure.
In addition to topological entropy, one can therefore consider the metric
entropy of \(T\) with respect to \(\vol_X\). Fix a Riemannian metric \(g\) on \(X\).
By Pesin's entropy formula
\cite{Pesin1977},
\[
h_{\vol_X}(T)
= 2\int_X \lambda^+(x)\,\di\vol_X(x),
\qquad
\lambda^+(x)
\coloneqq \lim_{N \to \infty}\frac{1}{N}\log\norm{D_xT^N}_g.
\]
The limit defining
\(\lambda^+(x)\) exists for \(\vol_X\)-almost every \(x\) and is independent
of the choice of \(g\). The factor \(2\) reflects the complex multiplicity
of the positive Lyapunov exponent.

Cantat asked in his thesis whether positive topological entropy implies
positive entropy with respect to \(\vol_X\)
\cite{CantatThesis1999}*{Chapter~3}.
Tosatti later stated this as a conjecture
\cite{Tosatti2021}*{Conjecture~7.1}.

\begin{conjecture}\label[conjecture]{conj:positive-canonical-entropy}
Let \(X\) be a projective K3 surface and let \(T\) be
an automorphism of \(X\) with positive topological entropy. Then
\[
h_{\vol_X}(T)>0.
\]
\end{conjecture}

Since \((X,T,\vol_X)\) is a volume-preserving dynamical system, the previous
conjecture could be regarded as a holomorphic analogue of the longstanding
positive metric entropy conjecture for the Chirikov standard map. Although \Cref{conj:positive-canonical-entropy}
appears difficult, we can raise the following question about the
metric entropy of the examples constructed in 
\Cref{thm:real-main}.

\begin{question}\label[question]{ques:special-case-positive-canonical-entropy}
Let \((X,T)\) be a pair obtained in \Cref{thm:real-main}. Is
\[
h_{\vol_X}(T)>0?
\]
\end{question}

For every such pair, \Cref{cor:canonical-volume-nonergodic} shows that \(\vol_X\) is not ergodic. A positive
answer to \Cref{ques:special-case-positive-canonical-entropy} would therefore
give a negative answer to \cite{Tosatti2021}*{Question~7.2}, which asks whether
positivity of the canonical-volume entropy implies ergodicity.

\appendix
\section{Penteract models and the good locus}
\label[appendix]{app:algebraic-details}

We prove \Cref{prop:good-locus} using the penteract models and their
kernel maps, and conclude with a remark on the resulting \((2,2,2)\)
presentations. Our aim is to exclude curves in the fibers of the
coordinate projections. We do this by requiring certain auxiliary
determinantal surfaces to be smooth: a curve in a fiber would yield a
singularity on one of them. Throughout, we use the parameter
space and coordinate notation of \Cref{sec:algebraic}.

The two defining equations can be assembled into a single tensor, which
places our family in the penteract construction of Bhargava--Ho--Kumar
\cite{BhargavaHoKumar2016}*{Section~7}. Set
\[
\begin{aligned}
V_1&\coloneqq \rH^0(\P^1_{x,\R},\cO(1))=\Span_{\R}\{x_0,x_1\},&
V_2&\coloneqq \rH^0(\P^1_{y,\R},\cO(1))=\Span_{\R}\{y_0,y_1\},\\
V_3&\coloneqq \rH^0(\P^1_{z,\R},\cO(1))=\Span_{\R}\{z_0,z_1\},&
V_4&\coloneqq \rH^0(\P^1_{w,\R},\cO(1))=\Span_{\R}\{w_0,w_1\}.
\end{aligned}
\]
Let \(V_5\coloneqq\R^2\), with ordered basis \(e_1,e_2\) indexing the
equations. We have the identification
\[
\rH^0(P_{\R},L_{\R})
\simeq V_1\ts_{\R}V_2\ts_{\R}V_3\ts_{\R}V_4.
\]
This allows us to regard \(s\in\cV\) as the penteract
\[
s=F_{1,s}\otimes e_1+F_{2,s}\otimes e_2
\in V_1\ts_{\R}V_2\ts_{\R}V_3\ts_{\R}V_4\ts_{\R}V_5.
\]

To see how the auxiliary models arise, first eliminate the
\(w\)-coordinate from \(X_{1234,s}\coloneqq X_{s,\R}\).
For fixed \((x,y,z)\), the defining equations are two homogeneous
linear equations in \((w_0,w_1)\). Their coefficient matrix is the
contraction \(s(x,y,z,\,\cdot\,,\,\cdot\,)\), viewed as a linear map
\(V_4^\vee\to V_5\). A projective solution exists exactly when this
matrix has zero determinant. This defines the three-factor model
\[
X_{123,s}\coloneqq
\left\{(x,y,z):
\det s(x,y,z,\,\cdot\,,\,\cdot\,)=0\right\}
\subset \P(V_1^\vee)\times\P(V_2^\vee)\times\P(V_3^\vee).
\]
Each matrix entry has multidegree \((1,1,1)\), so the determinant has
multidegree \((2,2,2)\). Forgetting \(w\) gives a morphism
\(X_{1234,s}\to X_{123,s}\). Its fiber over a point is the
projectivization of the kernel of this matrix at that point.

We now make the same construction for any choice of factors.
For a four-element subset \(I\subset\{1,\ldots,5\}\), let \(m\) be the
omitted index and define
\[
X_{I,s}
\coloneqq
\left\{(v_r)_{r\in I}:
s\bigl((v_r)_{r\in I},\,\cdot\,\bigr)=0\right\}
\subset
\prod_{r\in I}\P(V_r^\vee).
\]
Here the \(v_r\) are nonzero representatives in \(V_r^\vee\).
The contraction is an element of \(V_m\), so after choosing a basis of
\(V_m\), the model \(X_{I,s}\) is cut out by two equations of multidegree
\((1,1,1,1)\). This recovers the model \(X_{1234,s}\) above.

Let \(J\) be a three-element subset of \(\{1,\ldots,5\}\), and let
\(l,m\) be the two omitted indices. Contracting \(s\) at
\((v_r)_{r\in J}\) gives a \(2\times2\) matrix, or equivalently a linear
map \(V_l^\vee\to V_m\). Bhargava--Ho--Kumar define
\[
X_{J,s}
\coloneqq
\left\{(v_r)_{r\in J}:
\det s\bigl((v_r)_{r\in J},\,\cdot\,,\,\cdot\,\bigr)=0\right\}
\subset
\prod_{r\in J}\P(V_r^\vee).
\]
This is a determinantal hypersurface of multidegree \((2,2,2)\)
\cite{BhargavaHoKumar2016}*{Section~7.1}.

Suppose that \(J=\{i,j,k\}\subset I=\{i,j,k,l\}\), and let \(m\) be
the remaining index. Forgetting the \(l\)-coordinate gives a morphism
\[
q_{I,J}\colon X_{I,s}\longrightarrow X_{J,s}.
\]
Its fiber over \((v_i,v_j,v_k)\) is the projectivization of the kernel of
the linear map
\[
s(v_i,v_j,v_k,\,\cdot\,,\,\cdot\,)\colon V_l^\vee\longrightarrow V_m.
\]
Thus the fiber is a single point when the matrix has rank one, and the
whole \(\P(V_l^\vee)\) when the matrix is zero. At a zero contraction,
the determinant and all its first derivatives vanish. If \(X_{J,s}\)
is a surface, such a point is therefore singular. Bhargava--Ho--Kumar
call it a \emph{rank singularity}
\cite{BhargavaHoKumar2016}*{Section~7.1}.

If \(X_{J,s}\) is a geometrically smooth surface, the contraction matrix
has rank one along \(X_{J,s}\). The projectivization of its kernel varies
algebraically and gives an inverse to \(q_{I,J}\), defined over \(\R\). Thus \(q_{I,J}\) is an
isomorphism whenever the three-factor model is geometrically smooth.

For distinct \(i,j\in\{1,2,3,4\}\), denote the coordinate projection by
\[
p_{ij}\colon X_{1234,s}\longrightarrow
\P(V_i^\vee)\times\P(V_j^\vee).
\]
If \(\{i,j,k,l\}=\{1,2,3,4\}\), the map factors through either
corresponding determinantal model. For example,
\[
p_{ij}\colon X_{1234,s}\longrightarrow X_{ijk,s}
\longrightarrow \P(V_i^\vee)\times\P(V_j^\vee).
\]
In particular,
\[
p_{24}=\pi_{yw},
\qquad
p_{13}=\pi_{xz}.
\]
Thus the coordinate projections in \eqref{eq:two-projections} are among
the natural projections attached to the penteract models.

Bhargava--Ho--Kumar note that all ten three-factor models are nonsingular
for a general penteract
\cite{BhargavaHoKumar2016}*{paragraph following Theorem~8.1}.
The kernel maps then identify each four-factor model with a smooth
three-factor model. Hence all fifteen models are smooth surfaces for a
general penteract. We use this common smooth locus to prove the
proposition. Requiring the auxiliary models to be smooth will rule out
curves in the fibers of the coordinate projections: each possible curve
would produce a rank singularity in one of the three-factor models.
The remaining fibers have length two, giving the desired double covers.

\begin{proof}[Proof of \Cref{prop:good-locus}]
Define
\begin{equation}\label{eq:penteract-good-locus}
\cG\coloneqq
\bigcap_{\substack{I\subset\{1,\ldots,5\}\\
\abs{I}\in\{3,4\}}}
\left\{s\in\cV:
X_{I,s}\text{ is a geometrically smooth surface}\right\}.
\end{equation}
The Jacobian criterion and properness of the ambient products show that
these conditions define a Zariski-open subscheme of
\(\mathbb A^{32}_{\R}\). Its complexification is nonempty by the preceding
generic smoothness statement. Since \(\R^{32}\) is Zariski dense in
\(\mathbb A^{32}_{\R}\), its real locus \(\cG\) is nonempty as well.
For \(s\in\cG\), the surface \(X_{s,\R}=X_{1234,s}\) is smooth by
definition, proving~\textup{(i)}.

To prove~\textup{(ii)}, we first show that every coordinate projection
has zero-dimensional geometric fibers by excluding a common curve in
its two defining equations. Fix distinct \(i,j\in\{1,2,3,4\}\), and write
\(\{i,j,k,l\}=\{1,2,3,4\}\). Over a geometric point \((v_i,v_j)\),
the fiber of \(p_{ij}\) is cut out on
\(\P(V_k^\vee)\times\P(V_l^\vee)\) by two \((1,1)\)-forms
\(g_0,g_1\). A positive-dimensional fiber would give a common divisor
of bidegree \((1,0)\), \((0,1)\), or \((1,1)\).
A common \((1,0)\)-factor gives a point \(v_k\) at which
\[
s(v_i,v_j,v_k,\,\cdot\,,\,\cdot\,)=0.
\]
This point is a rank singularity of \(X_{ijk,s}\).
A common \((0,1)\)-factor gives a rank singularity of
\(X_{ijl,s}\). A common \((1,1)\)-factor makes \(g_0,g_1\) linearly
dependent, so there is a \(v_5\in\P(V_5^\vee)\) for which
\(s(v_i,v_j,\,\cdot\,,\,\cdot\,,v_5)\) is the zero matrix. This gives
a rank singularity of \(X_{ij5,s}\). All three cases contradict
\(s\in\cG\).

Thus \(p_{ij}\) is projective with zero-dimensional geometric fibers,
and hence finite. Each fiber is a proper intersection of two
\((1,1)\)-divisors, so its scheme-theoretic length is calculated by the intersection number
\[
(1,1)\mathbin{\cdot}(1,1)=2.
\]
The source is smooth, hence Cohen--Macaulay, and the target is a regular
surface. Miracle flatness therefore makes \(p_{ij}\) finite flat of
degree two \cite{Matsumura1986}*{Theorem~23.1}.
Taking \((i,j)=(2,4)\) and \((i,j)=(1,3)\) gives the two projections
in \eqref{eq:two-projections}.
\end{proof}

In the terminology of Bhargava--Ho--Kumar, a penteract
is \emph{nondegenerate} if all five four-factor models are smooth
\cite{BhargavaHoKumar2016}*{Section~7.1, pp.~33--34}.
In \eqref{eq:penteract-good-locus} we also require the ten three-factor
models to be smooth, so all the kernel maps are isomorphisms on \(\cG\).

\begin{remark}[A \((2,2,2)\) presentation]
\label[remark]{subsec:relation-222}
For every \(s\in\cG\), the complete intersection \(X_{s,\R}\) of two
\((1,1,1,1)\)-divisors is isomorphic over \(\R\) to the smooth
determinantal \((2,2,2)\) surface
\[
Y_{s,\R}\coloneqq X_{123,s}
=V_{(\P^1_{\R})^3}\bigl(\det s(x,y,z,\,\cdot\,,\,\cdot\,)\bigr).
\]
The isomorphism is the kernel map \(q_{1234,123}\), which forgets the
\(w\)-coordinate. Its inverse adds the unique point in the projectivization
of the kernel of the contraction matrix. After complexification, this identifies
\(X_s\) with a smooth \((2,2,2)\)-K3 surface
\cite{BhargavaHoKumar2016}*{Section~7.1, pp.~33--34}.
\end{remark}

\bibliographystyle{amsalpha}
\bibliography{refs}

\bigskip
\footnotesize

\textsc{Courant Institute of Mathematical Sciences, 
New York University, 251
Mercer St, New York, NY 10012
}
\par\nopagebreak
\textit{Email address}, \texttt{junyu.cao@nyu.edu
}\par\nopagebreak
\textit{Homepage}, \url{https://junyucao1024.github.io}.

\end{document}